\def\bege{\begin{equation}} \def\ende{\end{equation}}
\def\begr{\begin{eqnarray}} \def\endr{\end{eqnarray}}
\newcommand{\DD}{{\mathbb D}}
\def\D{\mathbb{D}}
\def\N{\mathbb N}
\def\a{\alpha}
\def\b{\beta}
\def\t{\theta}
\def\begr{\begin{eqnarray}} \def\endr{\end{eqnarray}}
\def\a{\alpha}\def\b{\beta}
\def\msk{\medskip}
\def\ol{\overline}
\def\msk{\medskip}
\def\ol{\overline}
\newtheorem{Lemma}{Lemma}[section]
\newtheorem{Theorem}[Lemma]{Theorem}
\newtheorem{Remark}[Lemma]{Remark}
\newcounter{other}            % Questions get letters
\begin{document}
	\title[]{Banach algebra structure in Hardy-Carleson type tent spaces and Ces\`aro-like operators }
	
\author{Rong Yang and Xiangling Zhu$\dagger$}
	
	\address{Rong Yang
	\\ Institute of Fundamental and Frontier Sciences, University of Electronic Science and Technology of China, 610054, Chengdu, Sichuan, P.R. China.}
	\email{yangrong071428@163.com  }

\address{Xiangling Zhu
	\\ University of Electronic Science and Technology of China Zhongshan Institute, 528402, Zhongshan, Guangdong, P.R.
	China.}
    \email{jyuzxl@163.com}

	\subjclass[2020]{30H99, 47B38, 47B99}

	\begin{abstract} 
	In this paper,    the Hadamard-Bergman convolution and Banach algebra structure by the Duhamel product on Hardy-Carleson type tent spaces was investigated.
	Moreover,   the  boundedness and compactness of the Ces\`aro-like operator $\mathcal{C}_\mu$ on Hardy-Carleson type tent spaces $AT_p^\infty(\a)$ are also studied.

	\thanks{$\dagger$ Corresponding author.}
	\vskip 3mm \noindent{\it Keywords}: Tent space, Hadamard-Bergman convolution, Banach algebra, Duhamel product, Ces\`aro-like operator.   
	\end{abstract}
	
	\maketitle

\section{Introduction}

Let $\mathbb{D}$ denote the open unit disk in the complex plane $\mathbb{C}$.
 Define $H(\mathbb{D})$ as the set of all analytic functions on $\mathbb{D}$.
For \(0 < p<\infty\), let \(H^{p}\) denote the Hardy space of all analytic functions \(f\in H(\D)\) such that
\[
\|f\|^p_{H^{p}}=\sup_{0<r<1}\frac{1}{2\pi}\int_{0}^{2\pi}|f(re^{it})|^{p}dt<\infty.
\]
The Bergman space \(A^{p}\) consists of all analytic functions \(f\in H(\D)\) for which
\[
\|f\|^p_{A^{p}}=\int_{\mathbb{D}}|f(z)|^{p}dA(z)<\infty,
\]
where \(dA(z) = \frac{1}{\pi}dxdy\) is the normalized Lebesgue area measure on \(\mathbb{D}\).

Let $\zeta > \frac{1}{2}$ and $\eta \in \mathbb{T}$, the boundary of $\mathbb{D}$.
The non-tangential approach region $\Gamma_\zeta(\eta)$ is defined by
\[
\Gamma(\eta)=\Gamma_\zeta(\eta) = \left\{z \in \mathbb{D} : |z - \eta| < \zeta(1 - |z|^2)\right\}.
\]
For $0<p<\infty$, the tent space $T_p^{\infty}(\alpha)$ consists of all measurable functions $f$ on $\mathbb{D}$ with
$$
\|f\|_{T_p^{\infty}(\alpha)}^p=\operatorname{esssup} _{\eta \in \mathbb{T}}\left(\sup _{u \in \Gamma(\eta)} \frac{1}{1-|u|^2} \int_{S(u)}|f(z)|^p(1-|z|^2)^{\a+1} d A(z)\right)<\infty,
$$
where % \(dA(z) = \frac{1}{\pi}dxdy\) is the normalized Lebesgue area measure on \(\mathbb{D}\) and 
$$S(r e^{i \theta})=\left\{\lambda e^{i t}: |t-\theta| \leq \frac{1-r}{2},1-\lambda \leq 1-r\right\}$$ for $r e^{i \theta} \in \mathbb{D} \backslash\{0\}$ and $S(0)=\mathbb{D}$.
%Notice that $f\in T_p^\infty(\a)$ if and only if the measure $d\mu_f(z)=|f(z)|^p(1-|z|^2)^{\a+1}dA(z)$ is a Carleson measure.
Denote $T_p^\infty(\alpha) \cap H(\mathbb{D})$ by $A T_p^\infty(\alpha)$, called the Hardy-Carleson type tent space or the analytic tent space.  Tent spaces were initially introduced by Coifman, Meyer and Stein in \cite{cms} to address problems in harmonic analysis. They offered a general framework for examining questions concerning significant spaces, such as Bergman spaces and Hardy spaces.

%In the above definition, the aperture $\zeta$ of the non-tangential region $\Gamma_\zeta(\eta)$ is suppressed because any two apertures yield function spaces with equivalent quasi-norms.

Let \(f(z)=\sum_{n = 0}^{\infty}c_{n}z^{n}\) and \(g(z)=\sum_{n = 0}^{\infty}d_{n}z^{n}\). The Hadamard product \(f*g\) of functions $f$ and $g$ is defined as
\[
(f*g)(z)=\sum_{n = 0}^{\infty}c_{n}d_{n}z^{n},\quad z\in\mathbb{D}.
\]
It is a well-established fact that for \(f\in H^{1}\) and \(g\in H^{q}(1\leq q <\infty)\), 
%the subsequent inequality is valid:
\[
\lVert f*g\rVert_{H^{q}}\leq\lVert f\rVert_{H^{1}}\lVert g\rVert_{H^{q}}.
\]
Nevertheless, when \(f\in A^{1}\) and \(g\in A^{q}(1\leq q <\infty)\), the inequality
\[
\lVert f*g\rVert_{A^{q}}\leq\lVert f\rVert_{A^{1}}\lVert g\rVert_{A^{q}}
\]
is not satisfied. 
%In Karapetrovi\'c and Mashreghi's work \cite{kbmj}, this issue was addressed by demonstrating that 
%\[
%\|f * g\|_{A^{q}}\leq\|D^{1}f\|_{A^{1}}\|g\|_{A^{q}}, \quad \text{where } D^{1}f(z)=(zf(z))'.
%\]
Karapetyants and Samko \cite{kas} introduced a modified form of the Hadamard product:
\[
f\widetilde{*}g(z)=\sum_{n = 0}^{\infty}\frac{c_{n}d_{n}}{n + 1}z^{n},\quad z\in\mathbb{D},
\]
which, in essence, represents a convolution in the sense that
\[
\mathbb{K}_{g}f(z)=\int_{\mathbb{D}}g(w)f(\overline{w}z)dA(w)=\sum_{n = 0}^{\infty}\mu_{n}c_{n}z^{n},
\]
where \(\mu_{n}=\frac{d_{n}}{n + 1}\) and \(\mathbb{K}_{g}\) is denoted as the Hadamard–Bergman convolution operator with kernel \(g\).
Moreover, in the Bergman space, the inequality
\[
\|\mathbb{K}_{g}f\|_{A^{p}}\leq\|f\|_{A^{p}}\|g\|_{A^1}
\]
is valid. 
A natural question arises: does the above inequality hold in the Hardy-Carleson type tent space \(AT_p^\infty(\a)\)? In this paper, we provide an affirmative answer to this question, establishing that
\[
\|\mathbb{K}_g f\|_{AT_p^\infty(\a)}
\lesssim \|f\|_{AT_p^\infty(\a)}\|g\|_{A^1}.
\]

As defined by Wigley (see \cite{wn1}), for analytic functions \(f\) and \(g\) on \(\mathbb{D}\), the Duhamel product \(f \circledast g\) is given by
\[
(f \circledast g)(z)=\frac{d}{dz}\int_{0}^{z}f(z - s)g(s)ds=\int_{0}^{z}f'(z - s)g(s)ds + f(0)g(z).
\]
This product has multiple applications such as operational calculus and boundary value problems. 
Wigley studied algebraic structures of analytic functions and maximal ideals in holomorphic function spaces and Hardy spaces \(H^p\) (\(p \geq 1\)) (see \cite{wn1,wn2}). 
The algebraic structure from the Duhamel product has been explored in different spaces. E.g., \cite{kms} for the Wiener algebra, \cite{kt06} for the space \(C^{(n)}(\D)\), \cite{ggs} for the Bergman space \(A^p\). 
For more Duhamel product results, see \cite{zlp,zlz} and  the references therein.

%The Deddens algebra, originally introduced by Deddens \cite{dj}, is formally defined as follows. Denote by \(\mathbb{N}\) the set of positive integers and let \(\mathscr{B}(X)\) represent the algebra comprising all bounded linear operators on a complex Banach space \(X\). For an invertible operator \(A\) and any \(n\in\mathbb{N}\), the Deddens algebra \(\mathcal{D}_{A}\) is formulated as  
%\[  
%\mathcal{D}_{A} = \left\{ T \in \mathscr{B}(X) : \sup_{n\in\mathbb{N}} \|A^nTA^{-n}\| < \infty \right\}.  
%\]  
%In \cite{ps}, Petrovic established an equivalent characterization: for an operator \(A\in\mathscr{B}(X)\), the inequality \[\|A^nTx\|\leq M\|A^n x\|\] holding for some constant \(M > 0\) and every \(x\in X\) implies \(T\in\mathcal{D}_{A}\). In \cite{ps1}, Petrovic further analyzed Deddens algebras and spectral radius algebras associated with weighted shift operators. 
%Sievewright’s later work in \cite{sd} studied Deddens algebras for weighted shifts, building on Petrovic’s earlier results in \cite{ps1}.  
%In \cite{ps2}, Petrovic and Sievewright studied Deddens algebras and compact composition operators on the Hardy space. For more on Deddens algebras, see \cite{km,xjp,zlz} and related references.  

For $f(z) = \sum_{n=0}^\infty a_n z^n \in H(\mathbb{D})$,
the Ces\`aro operator $\mathcal{C}$ is given by  
\[
\mathcal{C}(f)(z) = \sum_{n=0}^\infty \left( \frac{1}{n+1} \sum_{k=0}^n a_k \right) z^n, \quad z \in \mathbb{D}.
\] 
The integral
form of  $\mathcal{C}$ is
$$ \mathcal{C}(f)(z)=\frac 1z\int_0^zf(\zeta)\frac 1{1-\zeta}d\zeta=\int_0^1 \frac{f(tz)}{1 - tz} dt .
$$ 
 Many researchers have explored the Ces\`aro operator on some analytic function spaces. For more details, see \cite{miao,sis0,sis1,sis2}.  Recently, Galanopoulos, Girela and Merch\'an,  as cited in \cite{ggm},   introduced the Ces\`aro-like operator $\mathcal{C}_\mu$.
For a finite positive Borel measure $\mu$ on $[0, 1)$, the Ces\`aro-like operator $\mathcal{C}_\mu$ is defined on $H(\mathbb{D})$ as follows:
\[
\mathcal{C}_\mu(f)(z) = \sum_{n=0}^\infty \left( \mu_n \sum_{k=0}^n a_k \right) z^n = \int_0^1 \frac{f(tz)}{1 - tz} d\mu(t), \quad z \in \mathbb{D},
\]
where $\mu_n$ stands for the moment of order $n$ of $\mu$, that is, $\mu_n = \int_0^1 t^n d\mu(t)$. 
They studied  the action of the operators \(\mathcal{C}_\mu\) on distinct spaces of analytic functions in \(\mathbb{D}\), such as the Hardy spaces \(H^{p}\), the weighted Bergman spaces \(A_{\alpha}^{p}\), \(BMOA\) (bounded mean oscillation of analytic functions), and the Bloch space \(\mathcal{B}\). Subsequently,    Bao etc. \cite{bsw} investigated the range of  Ces\`aro-like operator acting on the space \(H^{\infty}\), which consists of bounded analytic functions on $\mathbb{D}$.  To achieve this, they described the characterizations of Carleson type measures on the interval $[0,1)$.  In particular, they answered   an open question that was originally posed in \cite{ggm}. 
The Ces\`aro-like operator $\mathcal{C}_\mu$ has attracted a great deal of interest among numerous scholars.   
  See \cite{bsw,syz,tang23,tang, xll} and the references therein for more details.

%
%This paper is organized as follows. In section 2,
%In section 3,
%In section 4, we describe the Deddens algebras of composition operators.

In this   paper,  we will investigate the Hadamard-Bergman convolution on Hardy-Carleson type tent spaces. Moreover, we   give a Banach algebra structure by the Duhamel product for the Hardy-Carleson type tent space \(AT_p^\infty(\a)\). Finally, we  characterize the boundedness and compactness of the Ces\`aro-like operators \(C_\mu\)  on the  space \(AT_p^\infty(\alpha)\).  Specifically, we  prove that   \(\mathcal{C}_\mu\) is bounded (compact) on \(AT_p^\infty(\alpha)\) if and only if \(\mu\) is a Carleson measure (vanishing Carleson measure).

% The paper is organized as follows. In Section 2, a number of fundamental lemmas are presented. These lemmas are important for   the subsequent development of the research. In Section 3, the key results of this study are expounded upon, and detailed proofs are provided.  

In this paper, we denote \(A\lesssim B\) to indicate the existence of a positive constant \(C\) such that \(A\leq CB\). Moreover, \(A\asymp B\) means that both \(A\lesssim B\) and \(B\lesssim A\) are valid.

\section{The Hadamard-Bergman convolution operators}
In this section, we describe the Hadamard-Bergman convolution on  Hardy-Carleson type tent spaces. To this end, we need some notations and lemmas.
For any $a\in\D$, let (see \cite{z1})
\[
\varphi_a(z) = \frac{a - z}{1 - \overline{a}z}, \quad z\in\D.
\]
It is obvious that $\varphi_a(z)$ is a M\"obius mapping that interchanges the points $0$ and $a$.
Let \(\varphi\) be an analytic self-map of \(\mathbb{D}\). The composition operator \(C_{\varphi}\) with symbol \(\varphi\) is defined by (see \cite{cm})
\[
C_{\varphi} f = f \circ \varphi.
\]

\begin{Lemma}\cite[Lemma 2.6]{ylh}\label{alp}
	Let $0<p<\infty$ and $\a>-2$.
	A function $f\in AT_p^\infty(\a)$ if and only if for each (or some) $t>0$,
	$$
	\sup_{a\in\D}\int_{\D}\frac{(1-|a|^2)^t}{|1-\ol{a}z|^{t+1}}|f(z)|^p(1-|z|^2)^{\a+1}dA(z)<\infty.
	$$
\end{Lemma}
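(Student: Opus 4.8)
The plan is to read the statement as the standard integral characterization of $1$-Carleson measures applied to the measure
\[
d\nu(z)=|f(z)|^p(1-|z|^2)^{\a+1}\,dA(z),
\]
so that the proof reduces to comparing three conditions on $\nu$: the defining tent bound, the Carleson-box bound $\nu(S(u))\lesssim 1-|u|^2$, and the integral bound in the statement. First I would record that $f\in AT_p^\infty(\a)$ if and only if
\[
\sup_{u\in\D}\frac{\nu(S(u))}{1-|u|^2}<\infty .
\]
The bound $\|f\|_{T_p^\infty(\a)}^p\le\sup_{u\in\D}\nu(S(u))/(1-|u|^2)$ is immediate, since the right-hand supremum ranges over all $u\in\D$ and in particular over every $u\in\Gamma(\eta)$; for the reverse inequality one invokes the aperture-independence of tent-space norms (enlarging $\zeta$ alters $\|\cdot\|_{T_p^\infty(\a)}$ only by a constant), so that every box $S(u)$ is effectively tested, together with the analyticity of $f$, which makes $\nu$ finite on compact subsets of $\D$. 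Note that the value $u=0$ already encodes $\nu(\D)<\infty$, a fact needed below.

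Next I would treat the easy implication, that the integral bound for a single $t>0$ forces the Carleson-box bound. Fix $u\in\D$. A direct computation from the geometry of $S(u)$ gives $|1-\overline{u}z|\lesssim 1-|u|^2$ for every $z\in S(u)$, so that
\[
\frac{(1-|u|^2)^t}{|1-\overline{u}z|^{t+1}}\gtrsim \frac{1}{1-|u|^2}\qquad(z\in S(u)).
\]
Integrating this over $S(u)$ alone and using the hypothesis with parameter $t$ yields $\nu(S(u))\lesssim(1-|u|^2)\,M_t$, where $M_t$ denotes the supremum in the statement; at $u=0$ this reads $\nu(\D)\le M_t$.

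For the converse I would fix $a\in\D$, set $h=1-|a|^2$, and split the disk into the box $S(a)$ and the dyadic coronas $R_k=S(a_k)\setminus S(a_{k-1})$, where $a_k$ lies on the radius through $a$ with $1-|a_k|\asymp 2^k h$, so that $S(a_k)$ is the box over the arc of length $\asymp 2^k h$ centred at $a/|a|$; the leftover $\D\setminus S(a_N)$, with $N$ chosen so that $2^N h\asymp 1$, is handled on its own. On $R_k$ (with $R_0=S(a)$) one has $|1-\overline{a}z|\asymp 2^k h$, while the Carleson-box bound gives $\nu(R_k)\le\nu(S(a_k))\lesssim 2^k h$, whence
\[
\int_{R_k}\frac{h^t}{|1-\overline{a}z|^{t+1}}\,d\nu\asymp\frac{h^t}{(2^k h)^{t+1}}\,\nu(R_k)\lesssim 2^{-kt}.
\]
On the leftover $|1-\overline{a}z|\gtrsim 1$, so its contribution is $\lesssim h^t\nu(\D)\lesssim 1$, where $\nu(\D)<\infty$ is used. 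Summing the geometric series $\sum_{k\ge0}2^{-kt}$, which converges exactly because $t>0$, bounds the whole integral by a constant independent of $a$, giving the integral condition for that arbitrary $t$.

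Chaining the three steps closes the equivalence in the stated ``each or some'' form: the integral bound for one $t$ yields the Carleson-box bound, hence $f\in AT_p^\infty(\a)$, which in turn yields the integral bound for every $t$. I expect the two places needing care to be the reduction in the first paragraph and the corona estimate in the third: the former because one must pass from the cone-indexed esssup to a genuine supremum over all boxes (including the central region that secures $\nu(\D)<\infty$), and the latter because the comparability $|1-\overline{a}z|\asymp 2^k h$ on each corona, and the convergence of the resulting series, must hold uniformly in $a$.
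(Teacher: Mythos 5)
Note first that the paper itself contains no proof of this lemma: it is imported verbatim from \cite[Lemma 2.6]{ylh}, so there is no in-paper argument to compare against, and your proposal must be judged as a self-contained proof. Judged so, it is correct, and it is the standard route one would expect: reducing membership in $AT_p^\infty(\a)$ to the Carleson-box bound $\nu(S(u))\lesssim 1-|u|^2$ for $d\nu(z)=|f(z)|^p(1-|z|^2)^{\a+1}dA(z)$ is sound (the direction from the box bound to the tent norm is trivial since cones are tested by boxes, and the reverse direction uses, as you say, that each $u$ with $\zeta(1+|u|)>1$ lies in $\Gamma(\eta)$ for a positive-measure arc of $\eta$, while central $u$ are handled by $\nu(\D)<\infty$, which follows from analyticity on compacts plus finitely many boxes covering an outer annulus); the kernel lower bound $|1-\ol{u}z|\lesssim 1-|u|^2$ on $S(u)$ correctly yields the ``some $t$'' implication, and your dyadic corona decomposition, with $|1-\ol{a}z|\asymp 2^k h$ on $R_k$, $\nu(S(a_k))\lesssim 2^k h$, and the geometric series $\sum_{k\ge 0}2^{-kt}$ converging precisely because $t>0$, gives the integral bound for every $t>0$. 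Two routine points you should write out in a final version: for $|a|$ small (say $|a|\le \tfrac12$, including $a=0$, where ``the radius through $a$'' is undefined) the whole integral is already $\lesssim \nu(\D)$ since $|1-\ol{a}z|\asymp 1$, so the corona construction is only needed for $|a|\ge \tfrac12$, where the comparability $|1-\ol{a}z|\asymp (1-|a|)+(1-|z|)+|\arg a-\arg z|$ holds with absolute constants and makes your estimates uniform in $a$ and $k$; and the appeal to aperture independence can be avoided entirely by the covering argument just described, which keeps the proof elementary.
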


According to \cite[Theorem 3.1]{zxlx}, we can obtain the following lemma.
\begin{Lemma}\label{th3.1}
Let $0<p<\infty$ and $\a>-2$. If $t\ge \frac{1}{p}$ and 
	\begin{align*}
	\sup_{u,a\in\D}\frac{(1-|u|^2)^t}{(|1-|\varphi_a(u)|^2)^\frac{1}{p}}\left(  \int_{\D}\frac{(1-|z|^2)^\a(1-|\varphi_a(z)|^2)}{|1-\ol{u}\varphi(z)|^{\a+2+pt}}dA(z) \right)^\frac{1}{p}<\infty,
\end{align*}
then $C_\varphi$ is a bounded operator on $AT_p^\infty(\a)$.
\end{Lemma}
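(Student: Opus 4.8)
The plan is to deduce the statement from the integral characterization in Lemma \ref{alp} together with the Carleson-measure description underlying \cite[Theorem 3.1]{zxlx}. First I would rewrite the target norm. Applying Lemma \ref{alp} to $C_\varphi f=f\circ\varphi$ with the free parameter of that lemma chosen to equal $1$, and using the identity $(1-|\varphi_a(z)|^2)=\frac{(1-|a|^2)(1-|z|^2)}{|1-\ol a z|^2}$, the kernel $\frac{(1-|a|^2)}{|1-\ol a z|^2}(1-|z|^2)^{\a+1}$ collapses to $(1-|z|^2)^{\a}(1-|\varphi_a(z)|^2)$. This gives
$$\|C_\varphi f\|_{AT_p^\infty(\a)}^p\asymp\sup_{a\in\D}\int_\D(1-|z|^2)^{\a}(1-|\varphi_a(z)|^2)\,|f(\varphi(z))|^p\,dA(z),$$
which already isolates both the weight $(1-|z|^2)^{\a}(1-|\varphi_a(z)|^2)$ and the outer supremum over $a$ that appear in the hypothesis.

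Next, for each fixed $a$ I would interpret the inner integral as a testing of $|f|^p$ against the pushforward under $\varphi$ of the measure $dm_a(z)=(1-|z|^2)^{\a}(1-|\varphi_a(z)|^2)\,dA(z)$; writing $\widetilde m_a=\varphi_* m_a$ we have $\int_\D|f(\varphi(z))|^p\,dm_a(z)=\int_\D|f(\zeta)|^p\,d\widetilde m_a(\zeta)$. Boundedness of $C_\varphi$ is thus equivalent to the embeddings $AT_p^\infty(\a)\hookrightarrow L^p(\widetilde m_a)$ holding uniformly in $a$. Here I would invoke the Carleson-measure characterization for $AT_p^\infty(\a)$ supplied by \cite[Theorem 3.1]{zxlx}: a positive measure $\mu$ satisfies $\int_\D|f|^p\,d\mu\lesssim\|f\|_{AT_p^\infty(\a)}^p$ precisely when a reproducing-kernel testing functional of the form $\sup_u(1-|u|^2)^{pt}\int_\D|1-\ol u\zeta|^{-(\a+2+pt)}\,d\mu(\zeta)$, suitably normalized, is finite for $t\ge\frac1p$.

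Applying this functional to $\mu=\widetilde m_a$ and undoing the pushforward turns $\int_\D|1-\ol u\zeta|^{-(\a+2+pt)}\,d\widetilde m_a(\zeta)$ into exactly $\int_\D\frac{(1-|z|^2)^{\a}(1-|\varphi_a(z)|^2)}{|1-\ol u\varphi(z)|^{\a+2+pt}}\,dA(z)$, while the normalization is recorded through $(1-|\varphi_a(u)|^2)=\frac{(1-|a|^2)(1-|u|^2)}{|1-\ol a u|^2}$. The uniform-in-$a$ finiteness of the resulting double supremum is then word-for-word the hypothesis, and the conclusion follows. The exponent restriction $t\ge\frac1p$ enters exactly here: it forces $\a+2+pt\ge\a+3$, which is what guarantees convergence of the testing integrals and the validity of the embedding theorem; a crude substitute obtained instead from the pointwise bound $|f(\zeta)|^p\lesssim\|f\|_{AT_p^\infty(\a)}^p(1-|\zeta|^2)^{-(\a+2)}$ (itself read off from Lemma \ref{alp} by integrating over a Bergman disk centered at $\zeta$) would only yield the weaker sufficient condition with $(1-|\varphi(z)|^2)^{-(\a+2)}$ in place of the sharp kernel. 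I expect the main obstacle to be the precise bookkeeping in this last step: matching the M\"obius factors and the normalization so that the testing functional of the pushforward measure collapses exactly to the stated expression, rather than merely to a comparable quantity.
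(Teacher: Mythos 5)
Your opening reduction is exactly right and coincides with what the paper does: by Lemma \ref{alp} with parameter $1$, $\|f\|_{AT_p^\infty(\alpha)}^p \asymp \sup_{a\in\mathbb{D}}\int_{\mathbb{D}}|f(z)|^p(1-|z|^2)^{\alpha}(1-|\varphi_a(z)|^2)\,dA(z)$, and the lemma is then precisely the $n=1$, $q=\alpha$, $s=1$ case of \cite[Theorem 3.1]{zxlx}, which the paper invokes with no further argument (note that the cited theorem is the composition-operator theorem itself, not a Carleson-embedding characterization as you describe it). The genuine gap is your pivotal claim that a single positive measure $\mu$ satisfies $\int_{\mathbb{D}}|f|^p\,d\mu\lesssim\|f\|_{AT_p^\infty(\alpha)}^p$ \emph{precisely when} $\sup_{u\in\mathbb{D}}(1-|u|^2)^{pt}\int_{\mathbb{D}}|1-\overline{u}\zeta|^{-(\alpha+2+pt)}\,d\mu(\zeta)<\infty$. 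The testing condition is equivalent to $\mu$ being an $(\alpha+2)$-Carleson measure; this is necessary (your test functions) but not sufficient, which is the direction your chain of implications needs. Concretely, take $p=2$, $\alpha=0$: the function $f(z)=(1-z)^{-1}$ lies in $AT_2^\infty(0)$ since $|f(z)|^2(1-|z|^2)\,dA$ is a Carleson measure, and $\mu=\sum_{k}4^{-k}\delta_{1-2^{-k}}$ is a $2$-Carleson measure passing your testing functional, yet $\int_{\mathbb{D}}|f|^2\,d\mu=\sum_k 1=\infty$. So ``uniform plain testing for the pushforwards $\widetilde m_a$'' does not yield the uniform embeddings, and the proof does not close.

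The factor $(1-|\varphi_a(u)|^2)^{-1/p}$ is therefore not bookkeeping you can absorb as a ``normalization'' of a fixed-measure testing functional: it couples $u$ to $a$, and that coupling is the whole content of the hypothesis. The mechanism behind the cited theorem uses it pointwise, inside a subharmonicity-plus-Fubini argument: with $dm_a(z)=(1-|z|^2)^{\alpha}(1-|\varphi_a(z)|^2)\,dA(z)$, the sub-mean-value inequality $|f(w)|^p\lesssim(1-|w|^2)^{-2}\int_{D(w,r)}|f|^p\,dA$ and the comparisons $1-|\xi|^2\asymp 1-|\varphi(z)|^2\asymp|1-\overline{\xi}\varphi(z)|$ for $\xi\in D(\varphi(z),r)$ give
\[
\int_{\mathbb{D}}|f\circ\varphi|^p\,dm_a\lesssim\int_{\mathbb{D}}|f(\xi)|^p(1-|\xi|^2)^{\alpha+pt}\left(\int_{\mathbb{D}}\frac{dm_a(z)}{|1-\overline{\xi}\varphi(z)|^{\alpha+2+pt}}\right)dA(\xi).
\]
Now the hypothesis, applied with $u=\xi$, bounds the inner integral by $M^p(1-|\varphi_a(\xi)|^2)(1-|\xi|^2)^{-pt}$, which reconstitutes exactly the tent weight $(1-|\xi|^2)^{\alpha}(1-|\varphi_a(\xi)|^2)$ and closes the estimate against $\|f\|_{AT_p^\infty(\alpha)}^p$ uniformly in $a$. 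Your decoupled route would instead close only to the Bergman-type quantity $\int_{\mathbb{D}}|f|^p(1-|\xi|^2)^{\alpha}\,dA$, which $AT_p^\infty(\alpha)$ does not control (membership only controls the weight $(1-|\xi|^2)^{\alpha+1}$) — this is exactly the failure your counexample-free ``suitably normalized'' hedge was papering over.
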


\begin{Lemma}\cite[Lemma 2.5]{of2}\label{le2.5}
	For \(s > - 1,r,t\geq0\) and \(r + t - s>2\), we have
\begin{flalign*}
\quad \quad  &\int_{\D}\frac{(1-\vert\xi\vert^{2})^{s}}{\vert1-\overline{\xi}z\vert^{r}\vert1-\overline{\xi}w\vert^{t}}\mathrm{d}A(\xi)&&
\end{flalign*}	
	\begin{equation*}
	\lesssim\left\{ 
	\begin{aligned}
		&\frac{1}{\vert1-\overline{z}w\vert^{r + t - s - 2}},& \text{if }&r - s,t - s < 2\\
		&\frac{1}{(1-\vert z\vert^{2})^{r - s - 2}\vert1-\overline{z}w\vert^{t}},& \text{if }&t - s < 2<r - s\\
		&\frac{1}{(1-\vert z\vert^{2})^{r - s - 2}\vert1-\overline{z}w\vert^{t}}+\frac{1}{(1-\vert w\vert^{2})^{t - s - 2}\vert1-\overline{z}w\vert^{r}},& \text{if }&r - s,t - s>2.
	\end{aligned}
	\right.
	\end{equation*}
\end{Lemma}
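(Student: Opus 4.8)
The plan is to reduce this two--pole integral to the classical single--pole Forelli--Rudin estimate and its localised versions. The basic tool is
\[
\int_{\D}\frac{(1-|\xi|^2)^s}{|1-\overline{\xi}a|^{c}}\,dA(\xi)\asymp
\begin{cases}
1,& c<s+2,\\[2pt]
(1-|a|^2)^{s+2-c},& c>s+2,
\end{cases}
\]
together with its ``local'' and ``tail'' refinements, valid for $d\gtrsim 1-|a|^2$,
\[
\int_{|1-\overline{\xi}a|\le d}\frac{(1-|\xi|^2)^s}{|1-\overline{\xi}a|^{c}}\,dA(\xi)\asymp
\begin{cases}
d^{\,s+2-c},& c<s+2,\\[2pt]
(1-|a|^2)^{s+2-c},& c>s+2,
\end{cases}
\qquad
\int_{|1-\overline{\xi}a|>d}\frac{(1-|\xi|^2)^s}{|1-\overline{\xi}a|^{c}}\,dA(\xi)\asymp d^{\,s+2-c}\quad(c>s+2),
\]
all of which follow from a dyadic decomposition of $\D$ along the level sets $|1-\overline{\xi}a|\asymp 2^{-k}$.

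Two elementary geometric facts drive the reduction: the separation inequality $|1-\overline{z}w|\lesssim|1-\overline{\xi}z|+|1-\overline{\xi}w|$ for $\xi,z,w\in\D$ (standard; after a rotation it reduces to a sine triangle inequality on $\T$), and the estimate $d:=|1-\overline{z}w|\ge\tfrac12\max\{1-|z|^2,\,1-|w|^2\}$. I would then split $\D=E_1\cup E_2$ with $E_1=\{|1-\overline{\xi}z|\le|1-\overline{\xi}w|\}$. On $E_1$ the separation inequality yields $|1-\overline{\xi}w|\gtrsim|1-\overline{\xi}z|+d$, so that
\[
\int_{E_1}\frac{(1-|\xi|^2)^s}{|1-\overline{\xi}z|^r|1-\overline{\xi}w|^t}\,dA(\xi)
\lesssim
J:=\int_{\D}\frac{(1-|\xi|^2)^s}{|1-\overline{\xi}z|^{r}\bigl(|1-\overline{\xi}z|+d\bigr)^{t}}\,dA(\xi),
\]
a genuine single--pole integral regularised at scale $d$.

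The core computation is the estimate of $J$. Splitting $\D$ at $|1-\overline{\xi}z|=d$, using $|1-\overline{\xi}z|+d\asymp d$ on $\{|1-\overline{\xi}z|\le d\}$ and $\asymp|1-\overline{\xi}z|$ off it, and invoking the local/tail estimates, I would obtain $J\lesssim d^{\,s+2-r-t}$ when $r<s+2$, and $J\lesssim(1-|z|^2)^{s+2-r}d^{-t}$ when $r>s+2$ (here $d\gtrsim 1-|z|^2$ lets the tail term be absorbed into the near--pole term). The symmetric estimate on $E_2$, interchanging $(z,r)\leftrightarrow(w,t)$, gives the same bound with $w,t$ in place of $z,r$. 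Summing the two contributions and reading off the outcome in the three regimes $\{r,t<s+2\}$, $\{t<s+2<r\}$, $\{r,t>s+2\}$ reproduces exactly the three lines of the statement.

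The main obstacle is the subcritical/subcritical case $r-s,t-s<2$: there each single--pole integral is merely bounded, so freezing one factor at its crude lower bound $|1-\overline{z}w|$ would give $|1-\overline{z}w|^{-\max\{r,t\}}$ and miss the sharp exponent $r+t-s-2$. Sharpness is recovered precisely by the refined bound $|1-\overline{\xi}w|\gtrsim|1-\overline{\xi}z|+d$ (not merely $\gtrsim d$) combined with the two--scale splitting of $J$ at $|1-\overline{\xi}z|=d$, whose near--pole piece contributes $d^{\,s+2-r-t}$. Finally, the hypotheses $r-s\ne2\ne t-s$ are exactly what keep all exponents off the borderline $c=s+2$ and so avoid the logarithmic cases throughout.
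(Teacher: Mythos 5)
Your argument is sound, but there is nothing in the paper to compare it against: the authors do not prove this lemma at all, they import it verbatim from Ortega and F\`abrega \cite[Lemma 2.5]{of2}. So let me assess your route on its merits and against the classical one. The standard proof—essentially the one this paper runs when it estimates its own two-pole integrals in Section 4—passes to polar coordinates, evaluates the inner circle integral via asymptotics of the type recorded in Lemma \ref{le2.2} (which already branch into cases according to whether each exponent exceeds $1$, with a logarithmic borderline), and then integrates radially via Lemma \ref{le2.3}. You instead split $\D$ along $\{|1-\ol{\xi}z|\le|1-\ol{\xi}w|\}$ using the quasi-triangle inequality, which is legitimate: since $|1-\ol{z}w|^{1/2}$ is a metric on $\ol{\D}$, one gets $|1-\ol{z}w|\le 2\bigl(|1-\ol{\xi}z|+|1-\ol{\xi}w|\bigr)$, hence $|1-\ol{\xi}w|\gtrsim|1-\ol{\xi}z|+d$ on that set, exactly as you claim; and your dyadic building block $\int_{\{|1-\ol{\xi}z|\asymp 2^{-k}\}}(1-|\xi|^2)^s\,dA(\xi)\asymp 2^{-k(s+2)}$ for $s>-1$ is valid because $|1-\ol{\xi}z|\ge 1-|z|$ forces the scales to run only from $1$ down to $1-|z|$, which is what makes your local estimate switch regime at $c=s+2$. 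The exponent bookkeeping checks out, including the absorption via $d\ge\tfrac12(1-|z|^2)$ of the tail term $d^{\,s+2-r-t}$ into $(1-|z|^2)^{s+2-r}d^{-t}$ when $r-s>2$; one small point worth writing out is that in the mixed regime $t-s<2<r-s$ the $E_2$ contribution $d^{\,s+2-r-t}$ must be absorbed by the identical inequality, which is immediate but glossed over in your final ``reading off.'' What your approach buys: one mechanism covering all three regimes uniformly, a transparent explanation of the hypothesis $r+t-s>2$ (tail convergence) and of the excluded borderlines $r-s=2$, $t-s=2$ (logarithmic cases), and an argument that transfers verbatim to the unit ball. What the classical polar-coordinates route buys is two-sided asymptotics (the $\asymp$ of Lemma \ref{le2.2}) rather than only the upper bounds the lemma asserts.
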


\begin{Lemma}\label{l3.1a}
	Let $0<p<\infty$, $\a>-1$, $t> \frac{1}{p}$ and $f\in AT_p^\infty(\a)$. 
	Then
	\[
	\sup_{a\in\D}\int_{\D}\frac{(1-|a|^2)^t}{|1-\ol{a}z|^{t+1}}|f(\ol{w}z)|^{p}(1-|z|^{2})^{\a+1}dA(z)\lesssim \|f\|_{AT_p^\infty(\a)}^p
	\]
	for all \( w \in \mathbb{D}\).
\end{Lemma}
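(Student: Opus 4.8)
The plan is to deduce the estimate from the Carleson-type characterisation of Lemma~\ref{alp} after transporting the dilation $z\mapsto\ol wz$ onto the variable of integration. Write $I(a,w)$ for the integral on the left-hand side and set $r=|w|$. Since a rotation $z\mapsto e^{i\psi}z$ leaves $dA$ and $(1-|z|^2)^{\a+1}$ invariant and only replaces $a$ by $ae^{-i\psi}$ in the kernel, taking the supremum over $a$ lets me assume $w=r\in[0,1)$. For $r\neq0$ I substitute $\zeta=\ol wz$, so that $z=\zeta/\ol w$, $dA(z)=r^{-2}dA(\zeta)$, and $\zeta$ runs over $r\D$. Using $1-\ol az=(\ol w-\ol a\zeta)/\ol w$ and $1-|z|^2=(r^2-|\zeta|^2)/r^2$, a bookkeeping of the powers of $r$ gives
\[
I(a,w)=r^{\,t-2\a-3}\,(1-|a|^2)^t\int_{r\D}\frac{(r^2-|\zeta|^2)^{\a+1}}{|\ol w-\ol a\zeta|^{t+1}}\,|f(\zeta)|^p\,dA(\zeta),
\]
and the task becomes to bound the right-hand side by $\|f\|_{AT_p^\infty(\a)}^p$, uniformly in $a$ and $r$.

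For the range $r\le\tfrac12$ I avoid the substituted form (whose prefactor may blow up as $r\to0$) and work with the original integral: the argument $\ol wz$ stays in the compact disc $\tfrac12\D$, so the standard pointwise bound $\sup_{|\zeta|\le1/2}|f(\zeta)|^p\lesssim\|f\|_{AT_p^\infty(\a)}^p$ (a consequence of subharmonicity together with Lemma~\ref{alp}) reduces matters to $\sup_{a\in\D}\int_\D\frac{(1-|a|^2)^t}{|1-\ol az|^{t+1}}(1-|z|^2)^{\a+1}\,dA(z)<\infty$. This last supremum is finite for every $t>0$ and $\a>-1$ by the standard single-kernel integral estimate (the one-kernel degeneration of Lemma~\ref{le2.5}), so the small-$r$ case is immediate.

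For $\tfrac12<r<1$ the power $r^{\,t-2\a-3}$ is comparable to a constant and the entire difficulty sits in the weighted kernel integral above. I would stress at the outset what makes this delicate: the singularity of $|\ol w-\ol a\zeta|^{-(t+1)}$ lies at $\zeta=\ol w/\ol a$, whose modulus $r/|a|$ approaches the boundary radius $r$ of the domain $r\D$ as $|a|\to1$, and the natural comparison centre $\ol aw/r^2$ may leave $\D$. Consequently the transformed kernel \emph{cannot} be dominated pointwise by any member $\frac{(1-|c|^2)^t}{|1-\ol c\zeta|^{t+1}}$ of the Carleson family of Lemma~\ref{alp}; the compensating mechanism is the weight $(r^2-|\zeta|^2)^{\a+1}$, which vanishes exactly where the kernel blows up, and it must be used under the integral rather than discarded. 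To exploit it I would use that Lemma~\ref{alp} says precisely that $d\nu_f:=|f|^p(1-|\zeta|^2)^{\a+1}\,dA$ is a Carleson measure with $\|\nu_f\|\lesssim\|f\|_{AT_p^\infty(\a)}^p$, decompose $r\D$ into Whitney-type boxes on which $(r^2-|\zeta|^2)$, $(1-|\zeta|^2)$ and $|\ol w-\ol a\zeta|$ are each comparable to constants, bound $\nu_f$ of each box by $\|f\|_{AT_p^\infty(\a)}^p$ times its sidelength, and sum the resulting purely geometric series, whose convergence is governed by an estimate of the type in Lemma~\ref{le2.5}. The factor $(r^2-|\zeta|^2)^{\a+1}/(1-|\zeta|^2)^{\a+1}\le1$ is exactly what prevents the boxes abutting $|\zeta|=r$ from producing a divergent contribution.

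The main obstacle is therefore this second case: a naive reduction to Lemma~\ref{alp} by pointwise kernel domination is impossible, since the natural comparison centre escapes $\D$ as $|a|\to1$, and one is forced to invoke the full Carleson-measure content of Lemma~\ref{alp} together with the vanishing of $(r^2-|\zeta|^2)^{\a+1}$ at the edge of the integration domain. (Alternatively, the whole lemma can be phrased as the uniform boundedness of the composition operator $C_{\ol w\,\cdot}$ on $AT_p^\infty(\a)$ via Lemma~\ref{th3.1}, but verifying that hypothesis uniformly in $w$ amounts to the same weighted estimate.) Everything else—the rotation reduction, the change of variables, the tracking of the powers of $r$, and the small-$r$ case—is routine.
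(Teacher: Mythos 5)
Your proposal is correct in outline but takes a genuinely different route from the paper, so let me compare. The paper's proof is a three-step computation: it regards $f\mapsto f(\ol w z)$ as the composition operator $C_\varphi$ with $\varphi(z)=\ol w z$, invokes the sufficient condition of Lemma \ref{th3.1}, rewrites $1-|\varphi_a(z)|^2=\frac{(1-|a|^2)(1-|z|^2)}{|1-\ol a z|^2}$, and estimates the resulting two-kernel integral by Lemma \ref{le2.5} (with $s=\a+1$, exponents $2$ and $\a+2+pt$, which is where $\a>-1$ and $t>\frac1p$ enter), ending with
\[
\sup_{u,a\in\D}\frac{(1-|u|^2)^{t-\frac{1}{p}}\,|1-\ol a u|^{\frac{2}{p}}}{(1-|uw|^2)^{t-\frac{1}{p}}\,|1-\ol a uw|^{\frac{2}{p}}},
\]
which is finite for each fixed $w$ (since $1-|u|^2\le 1-|uw|^2$ and $|1-\ol a uw|\ge 1-|w|$). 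Your plan avoids Lemma \ref{th3.1} altogether: rotation reduction, the substitution $\zeta=\ol wz$ with the power count $r^{t-2\a-3}$, the small-$r$ case via the pointwise bound and the one-kernel Forelli--Rudin estimate (all of which check out), and then a direct Carleson-box/Whitney argument on the transformed integral keeping the vanishing weight $(r^2-|\zeta|^2)^{\a+1}$ under the integral. Your diagnosis of the obstruction is accurate: the comparison centre is $a/w=a\ol w/r^2$ (you wrote $\ol a w/r^2$, a harmless conjugation slip of the same modulus), it leaves $\D$ when $|a|>r$, and indeed no single kernel $\frac{(1-|c|^2)^t}{|1-\ol c\zeta|^{t+1}}$ dominates pointwise once $1-|a|\ll 1-r$. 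What each route buys: the paper's is far shorter, but your instinct about uniformity is actually vindicated against it --- the displayed supremum above is \emph{not} bounded uniformly in $w$: taking $u=\rho$, $a=-\rho$, $w=-\sigma$ with $1-\rho^2=1-\sigma=\epsilon$ makes it of size $\epsilon^{-2/p}$, so the paper's argument, read literally, only yields a constant that may grow like $(1-|w|)^{-2/p}$, whereas the subsequent theorem on $\mathbb{K}_g$ integrates the lemma's estimate against $|g(w)|\,dA(w)$ and needs a constant independent of $w$. Your approach is precisely the kind that supplies the uniform constant (splitting boxes $S(I)$ according to $|I|\lessgtr 1-r$ and using Lemma \ref{az} on the boxes deep inside makes your sketched summation go through). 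The one caveat on your side is that the main case $\frac12<r<1$ remains a plan rather than a proof: the Whitney decomposition, the bound on $\nu_f$ of each box, and the convergence of the resulting series are asserted, not executed, and that is where essentially all the remaining work of your route lies.
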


\begin{proof}
	Let   $\varphi(z)=\ol{w}z$. It is easy to check that $\varphi:\D \to \D$ is analytic. Using Lemma \ref{th3.1}, if 
	\begin{align*}
		\sup_{u,a\in\D}\frac{(1-|u|^2)^t}{(|1-|\varphi_a(u)|^2)^\frac{1}{p}}\left(  \int_{\D}\frac{(1-|z|^2)^\a(1-|\varphi_a(z)|^2)}{|1-\ol{u}\varphi(z)|^{\a+2+pt}}dA(z) \right)^\frac{1}{p}<\infty,
	\end{align*}
	we obtain
	\begin{align*}
		&\sup_{a\in\D}\int_{\D}\frac{(1-|a|^2)^t}{|1-\ol{a}z|^{t+1}}|f(\varphi(z))|^{p}(1-|z|^{2})^{\a+1}dA(z)\\
		\lesssim&  \sup_{a\in\D}\int_{\D}\frac{(1-|a|^2)^t}{|1-\ol{a}z|^{t+1}}|f(z)|^{p}(1-|z|^{2})^{\a+1}dA(z).
	\end{align*}
	Therefore, by Lemma \ref{alp},
	\[
	\sup_{a\in\D}\int_{\D}\frac{(1-|a|^2)^t}{|1-\ol{a}z|^{t+1}}|f(\ol{w}z)|^{p}(1-|z|^{2})^{\a+1}dA(z)\lesssim \|f\|_{AT_p^\infty(\a)}^p.
	\]
	So, we only need to prove that
	\begin{align*}
		\sup_{u,a\in\D}\frac{(1-|u|^2)^t}{(|1-|\varphi_a(u)|^2)^\frac{1}{p}}\left(  \int_{\D}\frac{(1-|z|^2)^\a(1-|\varphi_a(z)|^2)}{|1-\ol{u}\varphi(z)|^{\a+2+pt}}dA(z) \right)^\frac{1}{p}<\infty.
	\end{align*}
	Using Lemma \ref{le2.5}, we get
	\begin{align*}
		&\sup_{u,a\in\D}\frac{(1-|u|^2)^t}{(|1-|\varphi_a(u)|^2)^\frac{1}{p}}\left(  \int_{\D}\frac{(1-|z|^2)^\a(1-|\varphi_a(z)|^2)}{|1-\ol{u}\varphi(z)|^{\a+2+pt}}dA(z) \right)^\frac{1}{p}\\
		=&\sup_{u,a\in\D}\frac{(1-|u|^2)^t|1-\ol{a}u|^{\frac{2}{p}}}{(1-|a|^2)^\frac{1}{p}(1-|u|^2)^{\frac{1}{p}}}\left( \int_{\D}  \frac{(1-|a|^2)(1-|z|^2)^{\a+1}}{|1-\ol{a}z|^2|1-\ol{uw}z|^{\a+2+pt}} dA(z)\right)^\frac{1}{p}\\
		=&\sup_{u,a\in\D}(1-|u|^2)^{t-\frac{1}{p}}|1-\ol{a}u|^{\frac{2}{p}}\left( \int_{\D}  \frac{(1-|z|^2)^{\a+1}}{|1-\ol{a}z|^2|1-\ol{uw}z|^{\a+2+pt}} dA(z)\right)^\frac{1}{p}\\
		\lesssim &\sup_{u,a\in\D}\frac{(1-|u|^2)^{t-\frac{1}{p}}|1-\ol{a}u|^{\frac{2}{p}}}{(1-|uw|^2)^{t-\frac{1}{p}}
|1-\ol{a}uw|^{\frac{2}{p}}}<\infty,
	\end{align*}
as desired.  Here we used the assumption that  $\a>-1$ and $t> \frac{1}{p}$. 	The proof is complete.
\end{proof}

We now state and demonstrate the  main result in this section.
\begin{Theorem}
	Let  $1\le p<\infty$, \(\a>-1\), \(f \in AT_p^\infty(\a)\) and \(g \in A^1\). Then 
	\[
	\|\mathbb{K}_g f\|_{AT_p^\infty(\a)}
	\lesssim \|f\|_{AT_p^\infty(\a)}\|g\|_{A^1}.
	\]
\end{Theorem}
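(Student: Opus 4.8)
The plan is to pass to the integral description of the $AT_p^\infty(\a)$-norm furnished by Lemma \ref{alp}. Fix once and for all a value $t>\frac1p$. Since $\mathbb{K}_g f\in H(\D)$ (the convolution preserves analyticity, as $\mathbb{K}_g f(z)=\sum_{n}\mu_n c_n z^n$), Lemma \ref{alp} reduces the claimed estimate to bounding
\[
\sup_{a\in\D}\int_{\D}\frac{(1-|a|^2)^t}{|1-\ol a z|^{t+1}}\,|\mathbb{K}_g f(z)|^p\,(1-|z|^2)^{\a+1}\,dA(z)
\]
by a constant multiple of $\|f\|_{AT_p^\infty(\a)}^p\,\|g\|_{A^1}^p$. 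For each $a\in\D$ it is convenient to introduce the measure $d\nu_a(z)=\frac{(1-|a|^2)^t}{|1-\ol a z|^{t+1}}(1-|z|^2)^{\a+1}\,dA(z)$, so that the displayed integral is exactly $\|\mathbb{K}_g f\|_{L^p(\nu_a)}^p$.

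Recalling that $\mathbb{K}_g f(z)=\int_{\D}g(w)\,f(\ol w z)\,dA(w)$, the key step is to apply Minkowski's integral inequality — this is precisely where the hypothesis $p\ge1$ enters — to move the $L^p(\nu_a)$-norm inside the $w$-integral:
\[
\|\mathbb{K}_g f\|_{L^p(\nu_a)}\le\int_{\D}|g(w)|\,\Big\|f(\ol w\,\cdot)\Big\|_{L^p(\nu_a)}\,dA(w).
\]
It then remains to control $\|f(\ol w\,\cdot)\|_{L^p(\nu_a)}$ uniformly in both $a$ and $w$. This is exactly the content of Lemma \ref{l3.1a}: with $t>\frac1p$ and $\a>-1$ one has
\[
\Big\|f(\ol w\,\cdot)\Big\|_{L^p(\nu_a)}^p=\int_{\D}\frac{(1-|a|^2)^t}{|1-\ol a z|^{t+1}}\,|f(\ol w z)|^{p}\,(1-|z|^2)^{\a+1}\,dA(z)\lesssim\|f\|_{AT_p^\infty(\a)}^p,
\]
with an implied constant independent of $a$ and $w$. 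Substituting this bound back, the integral of $|g(w)|$ over $\D$ produces the factor $\|g\|_{A^1}$, and taking the supremum over $a\in\D$ followed by one more appeal to Lemma \ref{alp} yields the asserted inequality.

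The main obstacle is the uniform-in-$w$ estimate of Lemma \ref{l3.1a}; once that dilation bound is in hand, the remainder is the soft combination of Minkowski's inequality with the norm characterization of Lemma \ref{alp}. In writing up I would make two routine checks: first, that Minkowski's integral inequality is legitimately applicable, i.e.\ that the relevant double integral converges absolutely, which follows from $g\in A^1$ together with the local boundedness of $f$ on compact subsets of $\D$; and second, that it is permissible to fix the single value $t>\frac1p$ in Lemma \ref{alp}, which is the case since that lemma holds for \emph{each} $t>0$.
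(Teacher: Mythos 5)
Your proposal is correct and follows essentially the same route as the paper's own proof: both fix $t>\frac1p$, use the norm characterization of Lemma \ref{alp}, apply Minkowski's integral inequality (using $p\ge1$) to pull the $L^p$-norm inside the integral against $|g(w)|\,dA(w)$, and then invoke the uniform-in-$w$ dilation estimate of Lemma \ref{l3.1a} to produce the factor $\|f\|_{AT_p^\infty(\a)}\|g\|_{A^1}$. Your added remarks on the absolute convergence needed for Minkowski and on fixing a single $t$ are sound and, if anything, slightly more careful than the paper's write-up.
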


\begin{proof}
	Using Minkowski's inequality and Lemma \ref{l3.1a}, for $t> \frac{1}{p}$, we obtain
	\begin{align*}
		\|\mathbb{K}_g f\|_{AT_p^\infty(\a)}=&\sup_{a\in\D}\left(\int_{\D}\frac{(1-|a|^2)^t}{|1-\ol{a}z|^{t+1}}|\mathbb{K}_g f(z)|^{p}(1-|z|^{2})^{\a+1}dA(z)\right)^\frac{1}{p}\\
		\le &\sup_{a\in\D}\left(\int_{\D}\frac{(1-|a|^2)^t}{|1-\ol{a}z|^{t+1}}\left(\int_{\D}|f(\ol{w}z)||g(w)|dA(w) \right)^{p}(1-|z|^{2})^{\a+1}dA(z)\right)^\frac{1}{p}\\
		\le &\sup_{a\in\D}\int_{\D}|g(w)|\left( \int_{\D}\frac{(1-|a|^2)^t}{|1-\ol{a}z|^{t+1}}|f(\ol{w}z)|^{p}(1-|z|^{2})^{\a+1}dA(z) \right)^\frac{1}{p}dA(w)\\
		\lesssim&
		\|f\|_{AT_p^\infty(\a)}\int_{\D}|g(w)|dA(w)\\
		 \lesssim&\|f\|_{AT_p^\infty(\a)}\|g\|_{A^1}.
	\end{align*}
	The proof is complete.
\end{proof}

\section{Duhamel product}
In this section, we give a Banach algebra structure by the Duhamel product for Hardy-Carleson type tent spaces.
Therefore, we need some simple formulas for Duhamel products $\circledast$.
\[
\begin{aligned}
	(f \circledast g)(z) 
	&=\int_{0}^{z}f'(z - s)g(s)ds + f(0)g(z)\\
	&=\int_{0}^{z}f(z - s)g'(s)ds + f(z)g(0)\\
	&=\int_{0}^{z}g'(z - s)f(s)ds + g(0)f(z)
	=(g\circledast f)(z).
\end{aligned}
\]
It is obvious that Duhamel product is a commutative product.

If the integral line segment $[0, z]$ is halved, then integration by parts leads to
\begin{equation}\label{'}
	\begin{split}
		(f \circledast g)(z) &= \int_{0}^{\frac{z}{2}} f(z-s)g'(s)ds + \int_{0}^{\frac{z}{2}}g(z-s)f'(s) ds \\
		&\quad + f(z)g(0) + g(z)f(0) - f\left(\frac{z}{2}\right)g\left(\frac{z}{2}\right).
	\end{split}
\end{equation}

We   need the following lemmas.

\begin{Lemma}\label{az}
\cite[Lemma 2.7]{ylh}
Let $0<p<\infty$, $\a>-2$, $n\in\N \cup \{0\}$ and $f\in AT_p^\infty(\a)$. Then 
$$
|f^{(n)}(z)|\lesssim\frac{\|f\|_{AT_p^\infty(\a)}}{(1-|z|^2)^{\frac{\a+2}{p}+n}}
$$
for all $z\in\D$.
	
\end{Lemma}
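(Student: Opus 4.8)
The plan is to reduce the general derivative bound to the case $n=0$, and to prove the $n=0$ case by combining the subharmonicity of $|f|^p$ with the integral characterization of the tent-space norm furnished by Lemma \ref{alp}. The whole argument is a localization onto a Euclidean disk whose radius is proportional to $1-|z|^2$.

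First I would treat $n=0$. Fix $z\in\D$, put $r=\tfrac12(1-|z|^2)$, and work on the disk $D(z,r)$. The key elementary observation is that for $w\in D(z,r)$ all the relevant geometric quantities are comparable to their values at $z$, namely $1-|w|^2\asymp 1-|z|^2$ and, upon choosing the free parameter $a=z$ in Lemma \ref{alp}, also $|1-\ol{z}w|\asymp 1-|z|^2$. Since $f$ is analytic, $|f|^p$ is subharmonic for every $p>0$, so the sub-mean value property gives
\[
|f(z)|^p\lesssim \frac{1}{r^2}\int_{D(z,r)}|f(w)|^p\,dA(w)\asymp \frac{1}{(1-|z|^2)^2}\int_{D(z,r)}|f(w)|^p\,dA(w).
\]

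Next I would insert the integral over $D(z,r)$ into the weighted integral of Lemma \ref{alp} with $a=z$. On $D(z,r)$ the weight satisfies $\frac{(1-|z|^2)^t}{|1-\ol{z}w|^{t+1}}(1-|w|^2)^{\a+1}\asymp (1-|z|^2)^{\a}$, so, extending the integration from $D(z,r)$ to all of $\D$ and using that the integrand is nonnegative,
\[
(1-|z|^2)^{\a}\int_{D(z,r)}|f(w)|^p\,dA(w)\lesssim \int_{\D}\frac{(1-|z|^2)^t}{|1-\ol{z}w|^{t+1}}|f(w)|^p(1-|w|^2)^{\a+1}\,dA(w)\lesssim \|f\|_{AT_p^\infty(\a)}^p.
\]
Combining this with the previous display yields $(1-|z|^2)^{\a+2}|f(z)|^p\lesssim \|f\|_{AT_p^\infty(\a)}^p$, which after taking $p$-th roots is exactly the asserted estimate for $n=0$. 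To pass to $n\ge1$ I would invoke the standard Cauchy estimate: apply Cauchy's integral formula for $f^{(n)}$ over the circle $|w-z|=\tfrac14(1-|z|^2)$, bound $|f(w)|$ on that circle by the $n=0$ estimate (again using $1-|w|^2\asymp 1-|z|^2$ there), and account for the factors $|w-z|^{-(n+1)}\asymp (1-|z|^2)^{-(n+1)}$ together with the arclength $\asymp 1-|z|^2$; this manufactures the extra power $(1-|z|^2)^{-n}$ and gives the full statement.

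The only genuinely technical point is the pair of comparabilities $|1-\ol{z}w|\asymp 1-|z|^2$ and $1-|w|^2\asymp 1-|z|^2$ for $w$ in a disk of radius proportional to $1-|z|^2$; these are routine but are what make the localization work, and the hypothesis $\a>-2$ enters only to keep the exponent $\frac{\a+2}{p}$ positive and the estimate meaningful. Once the comparabilities are in place the proof is a standard subharmonicity-plus-Cauchy-estimate argument, so I do not expect a serious obstacle.
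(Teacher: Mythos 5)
The paper itself offers no proof of this lemma: it is imported verbatim from \cite[Lemma 2.7]{ylh}, so there is no in-house argument to compare against. Your route --- sub-mean value property of the subharmonic function $|f|^p$ on a disk of radius $\asymp 1-|z|^2$, combined with the integral characterization of Lemma \ref{alp} evaluated at $a=z$, followed by a Cauchy estimate for $n\geq 1$ --- is the standard proof of such pointwise bounds and is sound in outline. One remark on your use of Lemma \ref{alp}: as stated it is only a finiteness criterion, and you need it quantitatively, in the form $\sup_{a\in\D}\int_{\D}\frac{(1-|a|^2)^t}{|1-\ol{a}z|^{t+1}}|f|^p(1-|z|^2)^{\a+1}dA \lesssim \|f\|_{AT_p^\infty(\a)}^p$; this norm-equivalence reading is legitimate (the paper itself uses it this way, e.g.\ in the proof of Lemma \ref{l3.1a} and of Theorem 2.5), but it deserves to be said explicitly.

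There is, however, one concrete quantitative slip: with $r=\frac12(1-|z|^2)$ the claimed comparability $1-|w|^2\asymp 1-|z|^2$ on $D(z,r)$ is false. For $|w-z|<r$ one only gets
\begin{equation*}
1-|w|\ \geq\ 1-|z|-r\ =\ (1-|z|)\Bigl(1-\tfrac{1+|z|}{2}\Bigr)\ =\ \tfrac{(1-|z|)^2}{2},
\end{equation*}
so as $|z|\to 1$ the disk comes within $O\bigl((1-|z|)^2\bigr)$ of the boundary, and $1-|w|$ can be an order of magnitude smaller than $1-|z|$. This breaks the weight comparison $\frac{(1-|z|^2)^t}{|1-\ol{z}w|^{t+1}}(1-|w|^2)^{\a+1}\asymp(1-|z|^2)^{\a}$ in the direction you need whenever $\a+1\geq 0$ (for $-2<\a<-1$ the needed one-sided bound survives, but the argument should not depend on the sign of $\a+1$). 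The fix is one character: take $r=\frac14(1-|z|^2)$, which yields $\frac12(1-|z|)\leq 1-|w|\leq\frac32(1-|z|)$ and hence the two-sided comparability; note that the other estimate, $|1-\ol{z}w|\asymp 1-|z|^2$, is unaffected, since $|1-\ol{z}w-(1-|z|^2)|\leq |w-z|\leq c(1-|z|^2)$ holds on $D\bigl(z,c(1-|z|^2)\bigr)$ for any $c<1$. Your Cauchy step already uses radius $\frac14(1-|z|^2)$ and is correct as written (there only the one-sided bound $1-|w|\gtrsim 1-|z|$ is needed, which this radius provides). Finally, your closing remark that $\a>-2$ enters ``only to keep $\frac{\a+2}{p}$ positive'' slightly understates its role: it is also the standing hypothesis under which Lemma \ref{alp} (and the space itself) is available; the localization per se works for any $\a$ once the radius is corrected.
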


\begin{Remark}\label{re2a}
From Lemma \ref{az}, if $K$ is a compact subset of $\D$, there exists a constant $C>0$ such that
$$
|f(z)|\leq C\|f\|_{AT_p^\infty(\a)}~~ {\text and}~~
|f'(z)|\leq C\|f\|_{AT_p^\infty(\a)}
$$
for all $z\in K$.
\end{Remark}

\begin{Lemma}\label{le2a}
Let $0<p<\infty$, $\a>-2$, and $f\in AT_p^\infty(\a)$. Then
$$
\sup_{a\in\D}\int_{\D}\frac{(1-|a|^2)^t}{|1-\ol{a}z|^{t+1}}\left|\int_{0}^{\frac{z}{2}}|f(z-s)||ds|\right|^{p}(1-|z|^{2})^{\a+1}dA(z)\le C\|f\|_{AT_p^\infty(\a)}^p
$$
for any $t>0$.

\end{Lemma}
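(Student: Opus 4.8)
The plan is to reduce the inner line integral to a one-dimensional radial integral, estimate it via the pointwise growth bound of Lemma \ref{az}, and then feed the outcome into a standard Forelli--Rudin type estimate for the outer integral over $\D$. The subtle point is that the naive pointwise bound on $f$ is \emph{not} sufficient: substituting $|f(z-s)|\lesssim \|f\|_{AT_p^\infty(\a)}(1-|z|^2)^{-(\a+2)/p}$ directly would leave the outer integrand with a factor $(1-|z|^2)^{-1}$, which diverges logarithmically. One must instead exploit the averaging effect of integrating $|f|$ along the segment, which gains (up to) a full power of $(1-|z|^2)$.

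First I would parametrize the segment $[0,z/2]$ by $s=\frac{z}{2}\tau$, $\tau\in[0,1]$, so that after the substitution $v=1-\tau/2$ the inner integral becomes $|z|\int_{1/2}^{1}|f(zv)|\,dv$. Applying Lemma \ref{az} with $n=0$ gives $|f(zv)|\lesssim \|f\|_{AT_p^\infty(\a)}(1-|z|^2v^2)^{-\beta}$, where $\beta=\frac{\a+2}{p}>0$ since $\a>-2$. Thus the inner integral is controlled by $\|f\|_{AT_p^\infty(\a)}\,|z|\int_{1/2}^{1}(1-|z|^2v^2)^{-\beta}\,dv$.

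The heart of the argument is the explicit evaluation of this radial integral. Writing $1-|z|^2v^2=(1-|z|^2)+|z|^2(1-v)(1+v)$ and integrating in $w=1-v$ produces three regimes according to the sign of $\beta-1$: the integral is bounded when $\beta<1$, of order $\log\frac{e}{1-|z|^2}$ when $\beta=1$, and of order $(1-|z|^2)^{1-\beta}$ when $\beta>1$. Using $|z|\le 1$, this yields, for the inner integral, the bounds $\|f\|_{AT_p^\infty(\a)}$, $\|f\|_{AT_p^\infty(\a)}\log\frac{e}{1-|z|^2}$, and $\|f\|_{AT_p^\infty(\a)}(1-|z|^2)^{1-\beta}$ respectively. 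This gain over the crude pointwise estimate is exactly what restores convergence.

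Finally, raising the inner bound to the $p$-th power and multiplying by the weight $(1-|z|^2)^{\a+1}$ collapses the combined exponent to $(1-|z|^2)^{p-1}$ when $\beta>1$ (since $p(1-\beta)+\a+1=p-1$), and to $(1-|z|^2)^{\a+1}$ when $\beta<1$; in the borderline case $\beta=1$ one absorbs the logarithm via $\bigl(\log\frac{e}{1-|z|^2}\bigr)^{p}\lesssim_\epsilon (1-|z|^2)^{-\epsilon}$, lowering the exponent to $\a+1-\epsilon$. In every case the resulting weight exponent $\gamma$ exceeds $-1$, so the outer quantity $(1-|a|^2)^{t}\int_{\D}\frac{(1-|z|^2)^{\gamma}}{|1-\ol{a}z|^{t+1}}\,dA(z)$ can be estimated by the standard integral asymptotics (cf.\ Lemma \ref{le2.5} with $w=z$), producing a power $(1-|a|^2)^{\gamma+1}$, a constant, or a logarithmic factor, each bounded uniformly in $a\in\D$. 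I expect the main obstacle to be the bookkeeping of the three cases of $\beta$ and checking in each that the power of $(1-|a|^2)$ generated by the outer estimate is non-negative, so that the supremum over $a$ indeed stays finite.
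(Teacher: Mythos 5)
Your proposal is correct and takes essentially the same route as the paper: the paper likewise reduces the segment integral to a radial one ($\int_{0}^{z/2}|f(z-s)||ds|\le\int_{0}^{r}|f(\rho e^{i\theta})|d\rho$), applies Lemma \ref{az} pointwise to evaluate $\int_{0}^{r}(1-\rho)^{-(\alpha+2)/p}d\rho$ with the same case split (a power of $1-r$ when $1-\frac{\alpha+2}{p}\neq 0$, a logarithm in the borderline case), and then finishes with the same Forelli--Rudin type bound for the outer integral, the exponents collapsing to $p-1>-1$ exactly as you computed. Your $\epsilon$-absorption of the logarithm is only a cosmetic variant of the paper's direct handling of the weight $|\ln(1-|z|^2)|^{p}$.
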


\begin{proof}
Let $z=re^{i\t}$ and $s=\rho e^{i\t}$, $0\le \rho \le \frac{r}{2}$. Then,
\[
\int_{0}^{\frac{z}{2}}|f(z - s)||ds|=\int_{\frac{r}{2}}^{r}|f(\rho e^{i\theta})|d\rho\leq\int_{0}^{r}|f(\rho e^{i\theta})|d\rho.
\]
Applying Lemma \ref{az}, we deduce that
\begin{align*}
	\int_{0}^{\frac{z}{2}}|f(z - s)||ds|
	\leq&\|f\|_{AT_p^\infty(\a)}\int_{0}^{r}\frac{d\rho}{(1-\rho)^{\frac{\a+2}{p}}}\\
\le&  \begin{cases} 
	\|f\|_{AT_p^\infty(\a)} \left( \frac{1 - (1 - r)^{1-\frac{\a+2}{p}}}{1-\frac{\a+2}{p}  } \right), & 1-\frac{\a+2}{p} \neq 0 \\
	\|f\|_{AT_p^\infty(\a)} |\ln(1 - r)| ,& 1-\frac{\a+2}{p} = 0.
\end{cases}
\end{align*}
Hence, when $1-\frac{\a+2}{p} \neq 0$, for any $t>0$, we obtain
\[
\begin{aligned}
&\sup_{a\in\D}\int_{\D}\frac{(1-|a|^2)^t}{|1-\ol{a}z|^{t+1}}\left|\int_{0}^{\frac{z}{2}}|f(z-s)||ds|\right|^{p}(1-|z|^{2})^{\a+1}dA(z)\\
\leq &C\|f\|_{AT_p^\infty(\a)}^p
\sup_{a\in\D}\int_{\D}\frac{(1-|a|^2)^t}{|1-\ol{a}z|^{t+1}}(1-|z|^2)^{p-1}dA(z)\\
\leq& C\|f\|_{AT_p^\infty(\a)}^p.
\end{aligned}
\]
When $1-\frac{\a+2}{p}=0$, for any $t>0$, we have
\[
\begin{aligned}
	&\sup_{a\in\D}\int_{\D}\frac{(1-|a|^2)^t}{|1-\ol{a}z|^{t+1}}\left|\int_{0}^{\frac{z}{2}}|f(z-s)||ds|\right|^{p}(1-|z|^{2})^{\a+1}dA(z)\\
	\le& C\|f\|_{AT_p^\infty(\a)}^p\sup_{a\in\D}\int_{\D}\frac{(1-|a|^2)^t}{|1-\ol{a}z|^{t+1}}|\ln(1-|z|^2)|^{p}dA(z)\\
	\le& C\|f\|_{AT_p^\infty(\a)}^p,
\end{aligned}
\]
as desired. The proof is complete.
\end{proof}

Now, we state and prove our main result in this section.

\begin{Theorem}\label{a1.1}
Let $1\le p<\infty$ and $\a>-2$. The Hardy-Carleson type tent space \(AT_p^\infty(\a)\) is a unital (the unit here is the constant function \(1\)) commutative Banach algebra with respect to Duhamel product \(\circledast\). Denote the algebras as \((AT_p^\infty(\a),\circledast)\).

\end{Theorem}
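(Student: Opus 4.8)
The plan is to split the statement into three independent ingredients: (i) $AT_p^\infty(\a)$ is a Banach space, which is already available from \cite{ylh} together with the equivalent norm furnished by Lemma \ref{alp} (here $p\ge1$ guarantees subadditivity via Minkowski); (ii) the algebraic identities that make $(AT_p^\infty(\a),\circledast)$ a commutative unital algebra; and (iii) the continuity estimate
\[
\|f\circledast g\|_{AT_p^\infty(\a)}\lesssim\|f\|_{AT_p^\infty(\a)}\,\|g\|_{AT_p^\infty(\a)} .
\]
The only genuinely analytic content is (iii); part (i) is quoted, and part (ii) is a formal computation valid on all of $H(\D)$ and hence, in particular, on the subspace $AT_p^\infty(\a)$.

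For (ii), commutativity is already recorded in the displayed formulas preceding \eqref{'}, and the unit is checked directly: taking $f\equiv1$ in $(f\circledast g)(z)=\int_0^z f'(z-s)g(s)\,ds+f(0)g(z)$ gives $1\circledast g=g$. Associativity is a purely algebraic fact: term-by-term integration yields $z^j\circledast z^k=\frac{j!\,k!}{(j+k)!}\,z^{j+k}$, so for $f=\sum a_nz^n$, $g=\sum b_nz^n$, $h=\sum c_nz^n$ the $n$-th Taylor coefficient of any triple Duhamel product equals $\sum_{i+j+k=n}a_ib_jc_k\,\frac{i!\,j!\,k!}{n!}$, which is symmetric in $f,g,h$; therefore $(f\circledast g)\circledast h=f\circledast(g\circledast h)$ on $\D$. (The interchange of summation and integration is justified by uniform convergence on compact subsets of $\D$, where $f,g,h$ are analytic.)

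For (iii), I would start from the symmetric splitting \eqref{'}, which expresses $f\circledast g$ as a sum of five terms, and estimate the $AT_p^\infty(\a)$-norm of each after fixing an admissible $t>0$ as in Lemma \ref{alp}. In the two integral terms $\int_0^{z/2}f(z-s)g'(s)\,ds$ and $\int_0^{z/2}g(z-s)f'(s)\,ds$ the variable $s$ runs over the compact set $\{|s|\le\tfrac12\}$, so Remark \ref{re2a} bounds $|g'(s)|$ (resp. $|f'(s)|$) by $C\|g\|_{AT_p^\infty(\a)}$ (resp. $C\|f\|_{AT_p^\infty(\a)}$); pulling this constant out reduces each term to $\int_0^{z/2}|f(z-s)|\,|ds|$ (resp. with $f$ and $g$ interchanged), whose norm is controlled by Lemma \ref{le2a}. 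The boundary terms $f(z)g(0)$ and $g(z)f(0)$ are handled by $|f(0)|,|g(0)|\lesssim\|f\|_{AT_p^\infty(\a)},\|g\|_{AT_p^\infty(\a)}$ from Lemma \ref{az}. The remaining term $f(z/2)g(z/2)$ is the delicate one: since $|z/2|\le\tfrac12$, Lemma \ref{az} gives $|f(z/2)|\lesssim\|f\|_{AT_p^\infty(\a)}$ and $|g(z/2)|\lesssim\|g\|_{AT_p^\infty(\a)}$, so $f(z/2)g(z/2)$ is a bounded analytic function with $\|\,\cdot\,\|_\infty\lesssim\|f\|_{AT_p^\infty(\a)}\|g\|_{AT_p^\infty(\a)}$; it then lies in $AT_p^\infty(\a)$ because every bounded $h$ satisfies $\|h\|_{AT_p^\infty(\a)}\le\|h\|_\infty\,\|1\|_{AT_p^\infty(\a)}$, where the finiteness of $\|1\|_{AT_p^\infty(\a)}$ follows from the standard estimate $\sup_{a\in\D}(1-|a|^2)^t\int_\D\frac{(1-|z|^2)^{\a+1}}{|1-\ol{a}z|^{t+1}}\,dA(z)<\infty$ (valid for all $t>0$ since $\a+1>-1$). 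Summing the five bounds by the triangle inequality yields (iii), and this simultaneously shows $\circledast$ maps $AT_p^\infty(\a)\times AT_p^\infty(\a)$ into $AT_p^\infty(\a)$.

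Finally, combining (i)--(iii), the product is a bounded bilinear map on the Banach space $AT_p^\infty(\a)$. Passing to the equivalent regular-representation norm $\|f\|_\ast:=\sup\{\|f\circledast g\|_{AT_p^\infty(\a)}:\|g\|_{AT_p^\infty(\a)}\le1\}$, which satisfies $\|f\|_{AT_p^\infty(\a)}/\|1\|_{AT_p^\infty(\a)}\le\|f\|_\ast\lesssim\|f\|_{AT_p^\infty(\a)}$, $\|1\|_\ast=1$ (because $1\circledast g=g$), and $\|f\circledast g\|_\ast\le\|f\|_\ast\|g\|_\ast$ (using associativity), turns $(AT_p^\infty(\a),\circledast)$ into a unital commutative Banach algebra with unit the constant function $1$. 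The main obstacle throughout is keeping the five pieces of the splitting \eqref{'} inside $AT_p^\infty(\a)$ — above all the product term $f(z/2)g(z/2)$ and the two integral terms — which is exactly what Lemmas \ref{az}, \ref{le2a} and Remark \ref{re2a} are tailored to provide, so once they are invoked the estimate is routine.
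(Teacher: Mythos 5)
Your proposal is correct and its analytic core is essentially the paper's own proof: the same five-term splitting \eqref{'}, with Remark \ref{re2a} bounding $f',g'$ and the values at $0$ and $z/2$ on the compact disk $|s|\le\tfrac12$, Lemma \ref{le2a} handling the two integral terms, and Lemma \ref{alp} with Minkowski's inequality assembling the estimate $\|f\circledast g\|_{AT_p^\infty(\a)}\le C\|f\|_{AT_p^\infty(\a)}\|g\|_{AT_p^\infty(\a)}$. The only difference is that you additionally verify the algebraic ingredients the paper leaves implicit --- the unit $1\circledast g=g$, associativity via $z^j\circledast z^k=\frac{j!\,k!}{(j+k)!}z^{j+k}$, and the standard renorming to get submultiplicativity with constant $1$ --- all of which are correct and make the write-up more complete without changing the method.
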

\begin{proof}
Let $f,g\in AT^\infty_p(\a)$.
Using $(\ref{'})$ and Remark \ref{re2a}, we get
\[
\begin{aligned}
	|(f \circledast g)(z)|\leq
	&C\|g\|_{AT_p^\infty(\a)}\int_{0}^{\frac{z}{2}}|f(z - s)||ds|+C\|f\|_{AT_p^\infty(\a)}\int_{0}^{\frac{z}{2}}|g(z-s)||ds|\\
	&+C\|g\|_{AT_p^\infty(\a)}|f(z)|+C\|f\|_{AT_p^\infty(\a)}|g(z)|+C\|f\|_{AT_p^\infty(\a)}\|g\|_{AT_p^\infty(\a)}.
\end{aligned}
\]
When \(1\le p<\infty\), using Lemma \ref{alp} and Minkowski’s inequality, for any $t>0$, we obtain
\[
\begin{aligned}
&\|(f \circledast g)\|_{AT_p^\infty(\a)}^p\\
=&\sup_{a\in\D}\int_{\D}\frac{(1-|a|^2)^t}{|1-\ol{a}z|^{t+1}}| (f \circledast g)(z) |^{p}(1-|z|^{2})^{\a+1}dA(z)\\
\leq&
C\|g\|_{AT_p^\infty(\a)}^p\sup_{a\in\D}\int_{\D}\frac{(1-|a|^2)^t}{|1-\ol{a}z|^{t+1}}\left|\int_{0}^{\frac{z}{2}}|f(z-s)||ds|\right|^{p}(1-|z|^{2})^{\a+1}dA(z)\\
&+C\|f\|_{AT_p^\infty(\a)}^p\sup_{a\in\D}\int_{\D}\frac{(1-|a|^2)^t}{|1-\ol{a}z|^{t+1}}\left|\int_{0}^{\frac{z}{2}}|g(z-s)||ds|\right|^{p}(1-|z|^{2})^{\a+1}dA(z)\\
&+C\|f\|_{AT_p^\infty(\a)}^p\|g\|_{AT_p^\infty(\a)}^p.
\end{aligned}
\]
Applying Lemma \ref{le2a}, we have
\[
\|(f \circledast g)\|_{AT_p^\infty(\a)}\leq C\|f\|_{AT_p^\infty(\a)} \|g\|_{AT_p^\infty(\a)}.
\]
The proof is complete.

\end{proof}

Next we establish a Young type property for the Duhamel convolution operator with analytic symbol \(f\):
\[
\mathfrak{D}_{f}g(z)=\int_{0}^{z}f'(z - s)g(s)+f(0)g(z), \quad g\in AT_p^\infty(\a).
\]

\begin{Theorem}
	Let $1\le p,q<\infty$, $\a,\b>-2$ and \(f\in AT_q^\infty(\b)\). Then
	\begin{enumerate}
		\item [(i)] If $1-\frac{\b+2}{q}<0$, then \(\mathfrak{D}_{f}\in\mathscr{B}(AT_p^\infty(\a))\) for any  $  p<\frac{\a+2}{\frac{\b+2}{q}-1}$.
		
		\item [(ii)] If $1-\frac{\b+2}{q}\ge0$, then \(\mathfrak{D}_{f}\in\mathscr{B}(AT_p^\infty(\a))\) for all $p\ge1$. 
		
	\end{enumerate}
\end{Theorem}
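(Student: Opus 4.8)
The plan is to exploit the identity $\mathfrak{D}_f g = f\circledast g$, so that the statement becomes a one-sided (``Young type'') version of the Banach-algebra estimate of Theorem \ref{a1.1}, except that now $f$ and $g$ are measured in the two different spaces $AT_q^\infty(\b)$ and $AT_p^\infty(\a)$. Accordingly I would begin from the symmetric half-integration formula $(\ref{'})$, which writes $(f\circledast g)(z)$ as a sum of five pieces: the two integrals $\int_{0}^{z/2}f(z-s)g'(s)ds$ and $\int_{0}^{z/2}g(z-s)f'(s)ds$, together with the boundary terms $f(z)g(0)$, $g(z)f(0)$ and $f(z/2)g(z/2)$. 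The guiding principle is that for $s\in[0,z/2]$ the inner endpoint stays in the compact disk $\{|s|\le 1/2\}$, where Remark \ref{re2a} controls $f,f',g,g'$ by the respective norms, while the argument $z-s$ runs out toward the boundary, where only the growth estimate of Lemma \ref{az} is available.

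With this in mind I would estimate the five pieces as follows. In $\int_{0}^{z/2}g(z-s)f'(s)ds$ the factor $f'(s)$ is bounded by $C\|f\|_{AT_q^\infty(\b)}$ on the compact disk, leaving $\int_{0}^{z/2}|g(z-s)||ds|$, which is exactly the quantity whose $AT_p^\infty(\a)$-norm is controlled by Lemma \ref{le2a}; this piece therefore contributes $\lesssim\|f\|_{AT_q^\infty(\b)}\|g\|_{AT_p^\infty(\a)}$ with no restriction on $p$. The terms $g(z)f(0)$ and $f(z/2)g(z/2)$ are handled in the same spirit (the latter because $z/2$ stays compact), again with no restriction. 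The delicate piece in which the hypotheses enter is $\int_{0}^{z/2}f(z-s)g'(s)ds$: here I bound $|g'(s)|\le C\|g\|_{AT_p^\infty(\a)}$ and am left with $\int_{r/2}^{r}|f(\lambda e^{i\theta})|\,d\lambda$ (writing $z=re^{i\theta}$), and Lemma \ref{az} gives $|f(\lambda e^{i\theta})|\lesssim\|f\|_{AT_q^\infty(\b)}(1-\lambda)^{-(\b+2)/q}$. It is precisely the integral $\int_{r/2}^{r}(1-\lambda)^{-(\b+2)/q}d\lambda$ that produces the dichotomy: when $(\b+2)/q>1$ it grows like $(1-r)^{1-(\b+2)/q}$, while when $(\b+2)/q\le 1$ it stays bounded (or grows logarithmically at the endpoint $(\b+2)/q=1$). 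Feeding the resulting pointwise bound into the norm via Lemma \ref{alp} and Minkowski's inequality, integrability at the boundary of $(1-|z|^2)^{\a+1+p(1-(\b+2)/q)}$ against the kernel $\frac{(1-|a|^2)^t}{|1-\ol{a}z|^{t+1}}$ demands $\a+1+p(1-(\b+2)/q)>-1$, which rearranges exactly to $p<\frac{\a+2}{(\b+2)/q-1}$ in case (i) and is automatic in case (ii).

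The step I expect to be the main obstacle is the surviving boundary term $f(z)g(0)$. Here $g(0)$ is harmless, since $|g(0)|\le C\|g\|_{AT_p^\infty(\a)}$, but the factor $f(z)$ forces one to control $\|f\|_{AT_p^\infty(\a)}$, that is, to know that $AT_q^\infty(\b)$ embeds into $AT_p^\infty(\a)$ with $\|f\|_{AT_p^\infty(\a)}\lesssim\|f\|_{AT_q^\infty(\b)}$. This cannot be bypassed: taking $g\equiv 1$ gives $\mathfrak{D}_f 1=f$, so $f\in AT_p^\infty(\a)$ is a \emph{necessary} condition for $\mathfrak{D}_f$ to map $AT_p^\infty(\a)$ into itself. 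I would obtain the embedding again through Lemma \ref{az}, since $|f(z)|^p\lesssim\|f\|_{AT_q^\infty(\b)}^p(1-|z|^2)^{-p(\b+2)/q}$ reduces membership to the finiteness of $\sup_{a}\int_\D\frac{(1-|a|^2)^t}{|1-\ol{a}z|^{t+1}}(1-|z|^2)^{\a+1-p(\b+2)/q}dA$. I would therefore isolate this embedding as a preliminary lemma, verify carefully that its integrability threshold is compatible with the constraint coming from $\int_{0}^{z/2}f(z-s)g'(s)ds$, and only then assemble the five bounds by Minkowski's inequality to conclude $\|\mathfrak{D}_f g\|_{AT_p^\infty(\a)}\lesssim\|f\|_{AT_q^\infty(\b)}\|g\|_{AT_p^\infty(\a)}$; reconciling the boundary-term threshold with the stated constant is the point that requires the most care.
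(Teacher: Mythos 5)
Your route is the same as the paper's: the half-interval formula (\ref{'}), Remark \ref{re2a} to bound $f'(s),g'(s)$ on the compact half of the segment, Lemma \ref{az} on the outer half producing the dichotomy from $\int_{r/2}^{r}(1-\lambda)^{-(\b+2)/q}d\lambda$, Lemma \ref{le2a} for the $g$-integral piece, and Lemma \ref{alp} plus Minkowski to assemble; your exponent count $\a+1+p\bigl(1-\frac{\b+2}{q}\bigr)>-1$ is exactly how the paper arrives at the constraint in (i). Where you diverge is the term $f(z)g(0)$, and there your instincts are sharper than the paper's proof: in the paper's final display this term has simply vanished --- only the two half-interval integrals and the summand $C\|f\|_{AT_q^\infty(\b)}^p\|g\|_{AT_p^\infty(\a)}^p$ survive --- so the quantity $\|g\|_{AT_p^\infty(\a)}^p\sup_{a\in\D}\int_{\D}\frac{(1-|a|^2)^t}{|1-\ol{a}z|^{t+1}}|f(z)|^p(1-|z|^2)^{\a+1}dA(z)=\|g\|_{AT_p^\infty(\a)}^p\|f\|_{AT_p^\infty(\a)}^p$ is absorbed without comment, which presupposes precisely the embedding $AT_q^\infty(\b)\subset AT_p^\infty(\a)$ that you isolate as a preliminary lemma.

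The verification you defer (``reconciling the boundary-term threshold with the stated constant'') in fact cannot succeed, and this is the genuine gap --- not in your reasoning, but one that your own necessity remark exposes in the statement. The pointwise route through Lemma \ref{az} yields the embedding only when $\a+1-p\frac{\b+2}{q}>-1$, i.e. $p<\frac{(\a+2)q}{\b+2}$, which is strictly stronger than $p<\frac{\a+2}{\frac{\b+2}{q}-1}$ in case (i) and than ``all $p\ge1$'' in case (ii); and this threshold is essentially sharp. Indeed, $f_0(z)=(1-z)^{-\frac{\b+2}{q}}$ lies in $AT_q^\infty(\b)$, since the Carleson box of side $\ell$ at $z=1$ gives $\int_{S}|f_0|^q(1-|z|^2)^{\b+1}dA\asymp\ell^{\b+3-(\b+2)}=\ell$, whereas for $p\frac{\b+2}{q}>\a+2$ the corresponding mass $\asymp\ell^{\,\a+3-p(\b+2)/q}$ violates the Carleson condition, so $f_0\notin AT_p^\infty(\a)$. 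Since, as you observe, $\mathfrak{D}_{f}1=f$ and the constant $1$ belongs to $AT_p^\infty(\a)$ for every $\a>-2$, the operator $\mathfrak{D}_{f_0}$ is unbounded on $AT_p^\infty(\a)$ for every $p>\frac{(\a+2)q}{\b+2}$ --- a range permitted by both (i) and (ii). So neither your argument nor the paper's closes in the stated generality: under the corrected restriction $p\frac{\b+2}{q}<\a+2$ (which also implies the constraint of case (i), so all five terms are then controlled) your plan goes through verbatim and proves $\|\mathfrak{D}_{f}g\|_{AT_p^\infty(\a)}\lesssim\|f\|_{AT_q^\infty(\b)}\|g\|_{AT_p^\infty(\a)}$, and outside it the conclusion is false. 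The constructive fix is exactly what you propose: state the embedding as a separate lemma and restate the theorem with the range it actually imposes, or else add $f\in AT_p^\infty(\a)$ as a hypothesis and carry $\|f\|_{AT_p^\infty(\a)}$ in the bound.
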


\begin{proof}
	Let \(f\in AT_q^\infty(\b) \) and \(g\in AT_p^\infty(\a) \). Using  $(\ref{'})$ and Remark \ref{re2a}, we get
	\[
	\begin{aligned}
		|(f \circledast g)(z)|\leq
		&C\|g\|_{AT_p^\infty(\a)}\int_{0}^{\frac{z}{2}}|f(z - s)||ds|+C\|f\|_{AT_p^\infty(\a)}\int_{0}^{\frac{z}{2}}|g(z-s)||ds|\\
		&+C\|g\|_{AT_p^\infty(\a)}|f(z)|+C\|f\|_{AT_p^\infty(\a)}|g(z)|+C\|f\|_{AT_p^\infty(\a)}\|g\|_{AT_p^\infty(\a)}.
	\end{aligned}
	\]
	Using Lemma \ref{az}, we obtain
	\begin{align*}
		 \int_{0}^{\frac{z}{2}}|f(z - s)||ds|
		\leq&\|f\|_{AT_q^\infty(\b)}\int_{0}^{r}\frac{d\rho}{(1-\rho)^{\frac{\b+2}{q}}}\\
		\le&  \begin{cases} 
			\|f\|_{AT_q^\infty(\b)} \left( \frac{1 - (1 - r)^{1-\frac{\b+2}{q}}}{1-\frac{\b+2}{q}  } \right), & 1-\frac{\b+2}{q} \neq 0, \\
			\|f\|_{AT_q^\infty(\b)} |\ln(1 - r)| ,& 1-\frac{\b+2}{q} = 0.
		\end{cases}
	\end{align*}
	Therefore, when $1-\frac{\b+2}{q} \neq 0$ and $p\left(1-\frac{\b+2}{q} \right)+\a+1>-1$, for any $t>0$, by Lemma \ref{alp} or Lemma 3.10 in \cite{z1} we have
	\[
	\begin{aligned}
		&\sup_{a\in\D}\int_{\D}\frac{(1-|a|^2)^t}{|1-\ol{a}z|^{t+1}}\left|\int_{0}^{\frac{z}{2}}|f(z-s)||ds|\right|^{p}(1-|z|^{2})^{\a+1}dA(z)\\
		\leq&C\|f\|_{AT_q^\infty(\b)}^p
		\sup_{a\in\D}\int_{\D}\frac{(1-|a|^2)^t}{|1-\ol{a}z|^{t+1}}(1-|z|^2)^{p\left(1-\frac{\b+2}{q} \right)+\a+1}dA(z)\\
		\leq& C\|f\|_{AT_q^\infty(\b)}^p. 
	\end{aligned}
	\]
	When $1-\frac{\b+2}{q}=0$, for any $t>0$, we get
	\[
	\begin{aligned}
		&\sup_{a\in\D}\int_{\D}\frac{(1-|a|^2)^t}{|1-\ol{a}z|^{t+1}}\left|\int_{0}^{\frac{z}{2}}|f(z-s)||ds|\right|^{p}(1-|z|^{2})^{\a+1}dA(z)\\
		\le& C\|f\|_{AT_q^\infty(\b)}^p\sup_{a\in\D}\int_{\D}\frac{(1-|a|^2)^t}{|1-\ol{a}z|^{t+1}}|\ln(1-|z|^2)|^{p}dA(z)\\
		\le& C\|f\|_{AT_q^\infty(\b)}^p.
	\end{aligned}
	\]
	Repeating the steps of the proof of Theorem \ref{a1.1}, for any $t>0$, we deduce that
	\[
	\begin{aligned}
		\|\mathfrak{D}_{f}g\|_{AT_p^\infty(\a)}^p
		\leq&C\|g\|_{AT_p^\infty(\a)}^p\sup_{a\in\D}\int_{\D}\frac{(1-|a|^2)^t}{|1-\ol{a}z|^{t+1}}\left|\int_{0}^{\frac{z}{2}}|f(z-s)||ds|\right|^{p}(1-|z|^{2})^{\a+1}dA(z)\\
		&+C\|f\|_{AT_q^\infty(\b)}^p\sup_{a\in\D}\int_{\D}\frac{(1-|a|^2)^t}{|1-\ol{a}z|^{t+1}}\left|\int_{0}^{\frac{z}{2}}|g(z-s)||ds|\right|^{p}(1-|z|^{2})^{\a+1}dA(z)\\
		&+C\|f\|_{AT_q^\infty(\b)}^p\|g\|_{AT_p^\infty(\a)}^p.
	\end{aligned}
	\]
	Applying Lemma \ref{le2a}, it follows  that 
	\[
	\|\mathfrak{D}_{f}g\|_{AT_p^\infty(\a)}\leq C\|f\|_{AT_q^\infty(\b)}\|g\|_{AT_p^\infty(\a)}.
	\]
\end{proof}

\section{ Boundedness and compactness of $C_\mu:AT_p^\infty(\a)\to AT_p^\infty(\a)$}

To prove the main result in this section, we  need some notations and lemmas. 
 %For any $b\in\D$, let 
%\[
%\varphi_b(w) = \frac{b - w}{1 - \overline{b}w}, \quad w\in\D.
%\]
%It is obvious that $\varphi_b$ is a M\"obius mapping that interchanges the points $0$ and $b$. 
%The following lemma gives the growth characterization of function in $AT_p^\infty(\a)$, which plays a crucial role in our proof process.
%
%
%\begin{lemma}\cite[Lemma 2.7]{ylh}\label{az}
%Let \(0 < p < \infty\), \(\alpha > - 2\), \(n \in \mathbb{N}\cup \{0\}\) and \(g \in AT_p^\infty(\alpha)\). Then 
%\[
%|g^{(n)}(w)|\lesssim\frac{\|g\|_{AT_p^\infty(\alpha)}}{(1 - |w|^2)^{\frac{\alpha + 2}{p} + n}}, \quad w \in \mathbb{D}.
%\] 
%\end{lemma}

Let \(\mu\) denote a positive Borel measure defined on \(\mathbb{D}\) and   \(s > 0\). The measure \(\mu\) is referred to as an \(s\)-Carleson measure on \(\mathbb{D}\) provided that (see \cite{ca2})  
\[
\|\mu\|_{s}=\sup_{I\subset \partial\mathbb{D}}\frac{\mu(S(I))}{|I|^{s}}<\infty.
\]
Here,
$
S(I)=\left\{ z=re^{i\theta}\in \DD:1-|I|\leq r<1, e^{i\theta}\in I \right\}. 
$
In particular, the $1$-Carleson measure coincides with the classical Carleson measure.

A positive Borel measure $\mu$ on  $[0,1)$ can be regarded as a Borel measure on  $\mathbb{D}$ by establishing an identification with the measure $\overline{\mu}$. The measure $\overline{\mu}$ is defined as follows: for every Borel subset $E$ of $\mathbb{D}$,
\[
\overline{\mu}(E)=\mu(E\cap [0,1))
\] 
Consequently, the measure \(\mu\) is an \(s\)-Carleson measure  on \([0, 1)\) if there exists a constant \(C > 0\) such that (see \cite{bsw})  
\[
\mu([t, 1)) \leq C (1 - t)^s, \quad 0 \leq t < 1.
\]
The measure \(\mu\) is a vanishing \(s\)-Carleson measure  on \([0, 1)\) if
\[
\lim_{t\rightarrow 1}\frac{\mu([t, 1))}{(1 - t)^s} =0.
\]

The following characterization of Carleson measures on $[0, 1)$ is due to Bao et al. (see \cite[Proposition 2.1]{bsw}).

\begin{Lemma}\label{le2.4}
Suppose \(r>0\), \(0 \leq c<s<\infty\) and \(\mu\) is a finite positive Borel measure on $[0, 1)$. Then the following statements are equivalent:

\begin{enumerate}
\item [(i)] \(\mu\) is a \(s\)-Carleson measure;
	
\item [(ii)]
\[\sup _{b \in \mathbb{D}} \int_{0}^{1} \frac{(1-|b|)^{r}}{(1-t)^{c}(1-|b| t)^{s+r-c}} d \mu(t)<\infty;\]
	
\item [(iii)]
\[\sup _{b \in \mathbb{D}} \int_{0}^{1} \frac{(1-|b|)^{r}}{(1-t)^{c}|1-b t|^{s+r-c}} d \mu(t)<\infty.\]
\end{enumerate}
\end{Lemma}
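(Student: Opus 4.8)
The plan is to establish the three statements cyclically as $(i)\Rightarrow(ii)\Rightarrow(iii)\Rightarrow(i)$, since two of the three implications are essentially immediate and the only substantial estimate is $(i)\Rightarrow(ii)$.

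I would first dispose of $(ii)\Rightarrow(iii)$. For $b\in\D$ and $t\in[0,1)$ the reverse triangle inequality gives $|1-bt|\ge 1-|b|t>0$, and since the exponent $s+r-c=(s-c)+r$ is strictly positive (using $s>c\ge0$ and $r>0$), we have $|1-bt|^{-(s+r-c)}\le(1-|b|t)^{-(s+r-c)}$ pointwise. Thus the integrand in $(iii)$ is dominated by that in $(ii)$ for every $b$, and taking suprema yields $(iii)$ at once. Next I would prove $(iii)\Rightarrow(i)$ by testing the integral at a single well-chosen point. Fix $t\in[0,1)$ and take $b=t\in\D$. Restricting the integral to $[t,1)$ and using that for $u\in[t,1)$ one has $(1-u)^{-c}\ge(1-t)^{-c}$ (as $c\ge0$) together with $|1-tu|=1-tu\le 1-t^2\le2(1-t)$, the integrand is bounded below by a constant multiple of $(1-t)^{r-c}(1-t)^{-(s+r-c)}=(1-t)^{-s}$ throughout $[t,1)$. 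Integrating gives $\mu([t,1))\lesssim(1-t)^{s}$, with constant controlled by the supremum in $(iii)$; this is exactly the $s$-Carleson condition.

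The main effort goes into $(i)\Rightarrow(ii)$. Writing $\rho=|b|$, the elementary comparison $1-\rho t\asymp(1-\rho)+(1-t)$, valid with absolute constants, lets me replace the kernel and then split the integral at $t=\rho$. On the inner piece $t\in[\rho,1)$ the factor $(1-\rho)+(1-t)\asymp(1-\rho)$, reducing matters to estimating $\int_\rho^1(1-t)^{-c}\,d\mu(t)$; a dyadic decomposition of $[\rho,1)$ by the annuli $1-t\in[2^{-k-1}(1-\rho),2^{-k}(1-\rho))$, combined with the Carleson bound $\mu([t,1))\le C(1-t)^s$ on each annulus, produces a geometric series converging because $s>c$, whose total is $\asymp(1-\rho)^{s-c}$; after restoring the prefactor $(1-\rho)^{c-s}$ this contribution is uniformly bounded. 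On the outer piece $t\in[0,\rho]$ one has $(1-\rho)+(1-t)\asymp(1-t)$, so the integral becomes $\int_0^\rho(1-t)^{-(s+r)}\,d\mu(t)$; the analogous dyadic decomposition with annuli $1-t\in[2^{j}(1-\rho),2^{j+1}(1-\rho))$ and the same Carleson bound yields a geometric series converging because $r>0$, again bounded after multiplying by the prefactor $(1-\rho)^r$. Since both pieces are bounded uniformly in $b$, statement $(ii)$ follows.

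The hard part is precisely this last decomposition: one must control the two competing singularities $(1-t)^{-c}$ and $(1-\rho t)^{-(s+r-c)}$ simultaneously, and it is the split at $t=\rho$ that decouples them, so that the two hypotheses $s>c$ and $r>0$ each separately guarantee the convergence of the corresponding geometric sum. Everything else is routine bookkeeping with the Carleson bound on dyadic annuli.
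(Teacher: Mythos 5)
Your proof is correct, but note a point of context: the paper does not prove this lemma at all; it is imported verbatim as \cite[Proposition 2.1]{bsw}, so there is no in-paper argument to compare against, and your write-up is a self-contained substitute for that citation. All three steps check out. For $(ii)\Rightarrow(iii)$, the domination $|1-bt|\ge 1-|b|t$ together with $s+r-c=(s-c)+r>0$ is exactly right. For $(iii)\Rightarrow(i)$, taking $b=t$ (real, so $|1-tu|=1-tu$), restricting to $u\in[t,1)$, and using $(1-u)^{-c}\ge(1-t)^{-c}$ and $1-tu\le 1-t^2\le 2(1-t)$ does yield the lower bound $\gtrsim(1-t)^{-s}$ and hence $\mu([t,1))\lesssim(1-t)^s$. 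For $(i)\Rightarrow(ii)$, the comparison $1-|b|t\asymp(1-|b|)+(1-t)$, the split at $t=|b|$, and the dyadic shells are the standard mechanism, with $s>c$ driving the inner geometric series and $r>0$ the outer one; this is the same kernel-comparison-plus-Carleson-bound argument that underlies \cite{bsw} (and the vanishing analogue, Lemma \ref{le4.2}, in \cite{tang,xll}), and whether one organizes it through dyadic annuli, as you do, or through a distribution-function/integration-by-parts step is interchangeable bookkeeping.

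One microscopic loose end in the outer dyadic sum of $(i)\Rightarrow(ii)$: for the largest shell index, where $2^{j+1}(1-|b|)>1$, the Carleson bound $\mu([u,1))\le C(1-u)^s$ is not literally applicable because the left endpoint $1-2^{j+1}(1-|b|)$ falls below $0$. There you should simply bound the shell's mass by $\mu([0,1))<\infty$ and note $2^{j+1}(1-|b|)\ge 1$, absorbing the total mass into the constant; this is precisely where the standing finiteness of $\mu$ is used. With that one sentence added, your proof is complete.
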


For vanishing Carleson measures on $[0, 1)$, we have the following result (see \cite[Lemma 4.2]{tang} or \cite[Lemma 2.3]{xll}). 

\begin{Lemma}\label{le4.2}
	Suppose \(r>0\), \(0 \leq c<s<\infty\) and \(\mu\) is a finite positive Borel measure on $[0, 1)$. Then the following statements are equivalent:
	\begin{enumerate}
		\item [(i)] \(\mu\) is a vanishing \(s\)-Carleson measure;
		
		\item [(ii)] 
		\[\lim _{ |b| \to 1^- } \int_{0}^{1} \frac{(1-|b|)^{r}}{(1-t)^{c}(1-|b| t)^{s+r-c}} d \mu(t)=0;\]
		
		\item [(iii)]
		\[\lim _{ |b| \to 1^- } \int_{0}^{1} \frac{(1-|b|)^{r}}{(1-t)^{c}|1-b t|^{s+r-c}} d \mu(t)=0.\]
	\end{enumerate}
\end{Lemma}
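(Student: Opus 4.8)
The plan is to prove the cyclic chain $(i)\Rightarrow(ii)\Rightarrow(iii)\Rightarrow(i)$, in close parallel with the bounded case recorded in Lemma \ref{le2.4}, which I will invoke freely — and in its quantitative form, namely that the supremum in Lemma \ref{le2.4}(ii) is comparable to the Carleson constant $\|\mu\|_s=\sup_{0\le t<1}\mu([t,1))/(1-t)^s$. Throughout note that $s+r-c>0$, since $r>0$ and $c<s$. The implication $(ii)\Rightarrow(iii)$ is immediate: for $b\in\D$ and $t\in[0,1)$ one has $|1-bt|\ge 1-|b|t>0$, so the integrand in $(iii)$ is dominated pointwise by the one in $(ii)$, and the limit $0$ in $(ii)$ forces the limit $0$ in $(iii)$.

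The heart of the argument is $(i)\Rightarrow(ii)$, which must be localized because the vanishing hypothesis is only asymptotic. I would fix $\varepsilon>0$ and, using that $\mu$ is a vanishing $s$-Carleson measure, choose $\delta\in(0,1)$ with $\mu([t,1))\le\varepsilon(1-t)^s$ for all $t\ge\delta$, then split the integral in $(ii)$ at $\delta$. On $[0,\delta]$ I bound $1-|b|t\ge 1-|b|\delta>1-\delta$ and $(1-t)^c\ge(1-\delta)^c$, so that piece is at most $(1-\delta)^{-(s+r)}(1-|b|)^r\,\mu([0,\delta))$, which tends to $0$ as $|b|\to1^-$ precisely because $r>0$. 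On $[\delta,1)$ I pass to the truncated measure $\mu_\delta=\mu|_{[\delta,1)}$; a short check shows $\mu_\delta([t,1))\le\varepsilon(1-t)^s$ for every $t\in[0,1)$, i.e. $\|\mu_\delta\|_s\le\varepsilon$, so the quantitative form of Lemma \ref{le2.4} gives $\int_\delta^1(\cdots)\,d\mu\le C\varepsilon$ uniformly in $b$. Letting $|b|\to1^-$ first and then $\varepsilon\to0$ yields $(ii)$.

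For $(iii)\Rightarrow(i)$ (and equally $(ii)\Rightarrow(i)$) I would test against real boundary points. Since the limits in $(ii)$ and $(iii)$ run over all $b$ with $|b|\to1^-$, it suffices to let $b=t_0\in[0,1)$ tend to $1$; for such real $b$ one has $|1-bt|=1-t_0t=1-|b|t$, so the two integrands coincide and it is enough to treat $(iii)$. Restricting the integration to $t\in[t_0,1)$ and using $(1-t)^c\le(1-t_0)^c$ together with $1-t_0t\le 1-t_0^2\le 2(1-t_0)$, the integrand is bounded below by $c_0(1-t_0)^{-s}$ with $c_0=2^{-(s+r-c)}$, whence
\[
\frac{\mu([t_0,1))}{(1-t_0)^s}\ \le\ \frac{1}{c_0}\int_0^1\frac{(1-t_0)^r}{(1-t)^c\,|1-t_0t|^{s+r-c}}\,d\mu(t).
\]
The right-hand side tends to $0$ as $t_0\to1^-$ by $(iii)$, so $\mu$ is a vanishing $s$-Carleson measure, which is $(i)$.

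The one genuinely delicate point is the passage $(i)\Rightarrow(ii)$: the splitting at $\delta$ separates a boundary-vanishing factor $(1-|b|)^r$ (handling the bulk away from $1$) from a tail that must be estimated uniformly in $b$, and it is the \emph{quantitative} content of Lemma \ref{le2.4} — not merely its qualitative finiteness statement — that lets the tail be controlled by $\varepsilon\|\mu_\delta\|_s$. The other two implications are soft: a pointwise domination for $(ii)\Rightarrow(iii)$ and a boundary test function for $(iii)\Rightarrow(i)$.
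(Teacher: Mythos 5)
Your proof is correct, but there is no internal proof to compare it against: the paper states Lemma~\ref{le4.2} without proof, citing \cite[Lemma 4.2]{tang} and \cite[Lemma 2.3]{xll}, and your cyclic scheme (i)$\Rightarrow$(ii)$\Rightarrow$(iii)$\Rightarrow$(i) is precisely the standard route taken in that literature (it parallels how the bounded analogue, Lemma~\ref{le2.4}, is proved in \cite{bsw}). All three steps check out: the pointwise bound $|1-bt|\ge 1-|b|t$ gives (ii)$\Rightarrow$(iii); the real test points $b=t_0\to 1^-$, with $(1-t)^c\le(1-t_0)^c$ and $1-t_0t\le 2(1-t_0)$ on $[t_0,1)$, give (iii)$\Rightarrow$(i); and your $\varepsilon$--$\delta$ splitting gives (i)$\Rightarrow$(ii), including the correct global verification $\|\mu_\delta\|_s\le\varepsilon$ (the case $t<\delta$ resting on $(1-\delta)^s\le(1-t)^s$). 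The one point to tighten is the one you flagged yourself: Lemma~\ref{le2.4} as stated is purely qualitative (finiteness equivalent to finiteness, with no uniformity over measures), so the tail bound $\sup_{b}\int_\delta^1(\cdots)\,d\mu\le C(r,s,c)\,\|\mu_\delta\|_s$ does not follow formally from its statement. It is true, and it is what the proof of \cite[Proposition 2.1]{bsw} actually delivers, but in a self-contained write-up you should either cite it in that quantitative form or derive it in two lines: the integrand $g_b(t)=(1-|b|)^r(1-t)^{-c}(1-|b|t)^{-(s+r-c)}$ is increasing in $t$, so by Fubini
\[
\int_0^1 g_b\,d\mu_\delta
= g_b(0)\,\mu_\delta([0,1))+\int_0^1 g_b'(u)\,\mu_\delta((u,1))\,du
\le \|\mu_\delta\|_s\Bigl(1+\int_0^1 g_b'(u)(1-u)^s\,du\Bigr),
\]
and the last integral is $O(1)$ uniformly in $b$ by Lemma~\ref{le2.3} applied to each of the two terms of $g_b'$ (here $s-c>0$ enters through the exponents $s-c-1>-1$ and $s-c>-1$, and each application produces a factor $(1-|b|)^{-r}$ that is cancelled by the prefactor $(1-|b|)^r$ in $g_b$). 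With that supplement your argument is complete and coincides in substance with the proofs in the sources the paper cites.
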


The next two lemmas are very useful  in the proof of our main results in this paper.

\begin{Lemma}\cite[Proposition 3.1]{zlt}\label{le2.2}
Let \(w, a \in \mathbb{D}\). For \(r>0\) and \(t>0\), let 
\[I_{w, a}=\int_{0}^{2 \pi} \frac{1}{\left|1-\overline{w} e^{i \theta}\right|^{t}\left|1-\overline{a} e^{i \theta}\right|^{r}} d \theta. \]
Then the following results hold:

\begin{enumerate}
%\item [(i)] When \(t+r<1\), \(I_{w, a} \asymp 1\);
%	
%\item [(ii)] When \(t+r=1\), \(I_{w, a} \asymp \log \frac{e}{|1-w \bar{a}|}\);
%	
%\item [(iii)] When \(t+r>1\) and \(\max \{t, r\}<1\), \(I_{w, a} \asymp \frac{1}{|1-w \bar{a}|^{t+r-1}}\);
%	
%\item [(iv)] When \(t=1>r\), 
%\[I_{w, a} \asymp \frac{1}{|1-w \bar{a}|^{r}} \log \frac{e}{|1-w \ol{\varphi_w(a)|}};\]
%	
%\item [(v)] When \(t=1=r\), 
%\[I_{w, a} \asymp \frac{1}{|1-w \bar{a}|} \log \frac{e}{1-|\varphi_{w}(a)|^{2}};\]
%	
%\item [(vi)] When \(t>1>r\), 
%\[I_{w, a} \asymp \frac{1}{(1-|w|^{2})^{t-1}|1-w \bar{a}|^{r}};\]

\item [(i)] When \(t > 1\) and \(r > 1\), 
\[I_{w, a} \asymp \frac{1}{(1-|w|^{2})^{t-1}|1-w \bar{a}|^{r}}+\frac{1}{(1-|a|^{2})^{r-1}|1-w \bar{a}|^{t}}.\]
	
\item [(ii)] When \(t > 1 = r\), 
\[ I_{w,a} \asymp \frac{1}{(1 - |w|^2)^{t - 1} |1 - \overline{w}a|} + \frac{1}{|1 - \overline{w}a|^t} \log \frac{e}{1 - |\varphi_w(a)|^2}. \] 
\end{enumerate}
\end{Lemma}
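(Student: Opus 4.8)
The plan is to reduce the two-kernel integral $I_{w,a}$ to one-kernel integrals by decomposing the circle according to which of the two factors is more singular, the decomposition being governed by the elementary inequality
\[
|1-\ol{w}a|\ \le\ |1-\ol{w}e^{i\theta}|+|1-\ol{a}e^{i\theta}|,\qquad \theta\in[0,2\pi),
\]
which follows from $1-\ol wa=(1-\ol we^{i\theta})+\ol w(e^{i\theta}-a)$ together with $|e^{i\theta}-a|=|1-\ol ae^{i\theta}|$ and $|w|\le 1$. Throughout I would use the classical single-kernel estimates (see, e.g., \cite{z1})
\[
\int_0^{2\pi}\frac{d\theta}{|1-\ol be^{i\theta}|^{s}}\asymp
\begin{cases}1,& 0<s<1,\\ \log\frac{e}{1-|b|^{2}},& s=1,\\ (1-|b|^{2})^{1-s},& s>1,\end{cases}
\]
together with the identity $1-|\varphi_w(a)|^{2}=\frac{(1-|w|^{2})(1-|a|^{2})}{|1-\ol wa|^{2}}$, so that $\log\frac{e}{1-|\varphi_w(a)|^{2}}\asymp 1+\log\frac{|1-\ol wa|}{1-|a|}+\log\frac{|1-\ol wa|}{1-|w|}$.

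For part (i) I would write $\T=A\cup B$ with $A=\{e^{i\theta}:|1-\ol ae^{i\theta}|\le|1-\ol we^{i\theta}|\}$ and $B=\T\setminus A$. On $A$ the displayed inequality gives $|1-\ol we^{i\theta}|\ge\tfrac12|1-\ol wa|$, so the $w$-factor is comparable to $|1-\ol wa|^{-t}$ and can be pulled out; the remaining integral of the $a$-factor over $A$ is dominated by the full single-kernel integral, which is $\asymp(1-|a|^{2})^{1-r}$ because $r>1$. This produces the term $(1-|a|^{2})^{1-r}|1-\ol wa|^{-t}$, and the symmetric argument on $B$ produces $(1-|w|^{2})^{1-t}|1-\ol wa|^{-r}$, giving the upper bound. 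For the matching lower bound I would restrict the integral to the short arc of length $\asymp 1-|w|$ centred at the point of $\T$ nearest $w$; there $|1-\ol we^{i\theta}|\asymp 1-|w|$, and since $1-|w|\le|1-\ol wa|$ one also has $|1-\ol ae^{i\theta}|\lesssim|1-\ol wa|$, so the integrand is $\gtrsim(1-|w|)^{-t}|1-\ol wa|^{-r}$ on an arc of that length, whence $I_{w,a}\gtrsim(1-|w|^{2})^{1-t}|1-\ol wa|^{-r}$. The analogous arc centred near $a$ gives the other term, and as both terms are nonnegative their sum is comparable to the maximum, completing (i).

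For part (ii) the same decomposition still yields, on $B$, the first term $(1-|w|^{2})^{1-t}|1-\ol wa|^{-1}$ exactly as before, since the exponent $t>1$ again gives a clean power. The delicate point — and what I expect to be the main obstacle — is the region near $a$, where the $a$-exponent is now the critical value $r=1$ and the single-kernel integral is logarithmic. Here the crude step ``replace $A$ by the whole circle'' is too lossy: it would return $\log\frac{e}{1-|a|^{2}}$, which can strictly exceed the asserted $\log\frac{e}{1-|\varphi_w(a)|^{2}}$, precisely because a logarithm, unlike a convergent power, is sensitive to the upper cut-off of the region of integration. To capture the correct quantity I would instead split $\T$ into three pieces according to angular scale relative to $|1-\ol wa|$: an arc about the point nearest $a$ of angular radius $\asymp|1-\ol wa|$, the analogous arc about $w$, and the complementary far region. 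On the $a$-arc the $w$-factor is $\asymp|1-\ol wa|^{-t}$, and integrating $|1-\ol ae^{i\theta}|^{-1}$ only up to scale $|1-\ol wa|$ yields $|1-\ol wa|^{-t}\log\frac{|1-\ol wa|}{1-|a|}$; on the far region both factors decay like the distance to the respective centre and one checks the contribution is $O(|1-\ol wa|^{-t})$, hence lower order; the $w$-arc reproduces the first term.

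Assembling these contributions and using the splitting of $\log\frac{e}{1-|\varphi_w(a)|^{2}}$ recorded above — in which the summand $\log\frac{|1-\ol wa|}{1-|w|}$ is always dominated by the power term $(1-|w|^{2})^{1-t}|1-\ol wa|^{-1}$, since $\log x\lesssim x^{t-1}$ for $x\ge 1$ when $t>1$ — gives the upper bound in (ii). The lower bound follows once more by restricting the integral to the arc about $a$ (which alone already produces the logarithmic term) and to the arc about $w$ (which produces the power term). The only genuine work is this scale-by-scale bookkeeping in the endpoint case $r=1$; parts (i) and (ii) otherwise run in parallel on the tools assembled in the first paragraph.
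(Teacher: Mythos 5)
The paper gives no proof of this lemma at all --- it is quoted from \cite[Proposition 3.1]{zlt} --- so your argument must stand on its own. Part (i), the single-kernel table, the inequality $|1-\ol wa|\le|1-\ol we^{i\theta}|+|1-\ol ae^{i\theta}|$, both lower bounds, and the reductions of $\log\frac{e}{1-|\varphi_w(a)|^2}$ (including the absorption $\log x\lesssim x^{t-1}$) are all correct. The genuine gap is in the upper bound of (ii), in precisely the regime your three-arc bookkeeping cannot see. Both of your arcs have radius $\asymp|1-\ol wa|$, while the angular separation of their centres is always $O(|1-\ol wa|)$ (for $|w|,|a|\ge\frac12$ one has $|\arg w-\arg a|\lesssim|1-\ol wa|$). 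So whenever $|1-\ol wa|$ is governed by the radial gaps $1-|w|$ and $1-|a|$ rather than by $|\arg w-\arg a|$, the two arcs coincide up to constants, and on this common arc \emph{neither} factor is constant, so neither may be pulled out. Concretely, take $\arg w=\arg a=0$, $1-|w|=\epsilon$, $1-|a|=\epsilon^2$, so $|1-\ol wa|\asymp\epsilon$. Your claim that on the $w$-arc the $a$-factor is $\asymp|1-\ol wa|^{-1}$ fails there ($|1-\ol ae^{i\theta}|^{-1}\asymp\epsilon^{-2}$ near $\theta=0$), and the true contribution of that arc is
\[
\int_{|\theta|\le\epsilon}\frac{d\theta}{(\epsilon+|\theta|)^{t}\,(\epsilon^{2}+|\theta|)}\ \asymp\ \epsilon^{-t}\log\frac{1}{\epsilon},
\]
whereas your bookkeeping assigns it only the power term $(1-|w|)^{1-t}|1-\ol wa|^{-1}\asymp\epsilon^{-t}$: you undercount by $\log(1/\epsilon)$, and here it is the arc about $w$, not the one about $a$, that generates the logarithm (the asserted estimate survives, since $\log\frac{e}{1-|\varphi_w(a)|^2}\asymp\log\frac{1}{\epsilon}$ in this configuration). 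Swapping the radii ($1-|w|=\epsilon^{2}$, $1-|a|=\epsilon$) kills the twin claim that the $w$-factor is $\asymp|1-\ol wa|^{-t}$ on the $a$-arc, since that factor peaks at $\epsilon^{-2t}\gg\epsilon^{-t}$ inside it.

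The repair is the case analysis your sketch skips. Split according to which of $1-|w|$, $1-|a|$, $|\arg w-\arg a|$ realizes $|1-\ol wa|$ (the case $|w|\le\frac12$ or $|a|\le\frac12$ is trivial, both sides then being comparable to single-kernel quantities). In the angularly separated case your three-piece argument works verbatim. In the radially dominated cases, replace the two arcs by one arc of radius $\asymp|1-\ol wa|$ and integrate the product with \emph{both} factors varying, i.e.\ estimate $\int_0^{C|1-\ol wa|}\bigl((1-|w|)+\delta\bigr)^{-t}\bigl((1-|a|)+\delta\bigr)^{-1}d\delta$ by cutting at the scales $1-|w|$ and $1-|a|$: the band between $1-|a|$ and $|1-\ol wa|$ yields $|1-\ol wa|^{-t}\log\frac{|1-\ol wa|}{1-|a|}$ when $1-|a|$ is the smaller gap, while the band below $\min(1-|w|,1-|a|)$ and the band between $1-|w|$ and $1-|a|$ (when $1-|w|$ is smaller) are absorbed into the power term. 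This two-scale step is unavoidable for exactly the reason you yourself flag: a critical-exponent logarithm is sensitive to cut-offs, and in the overlapping-arc regime the relevant cut-offs live inside a single arc, not at its boundary.
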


\begin{Lemma}\cite[Lemma 2.2]{zgsl}\label{le2.3}
Let \(\delta>-1\), \(c>0\), \(0 \leq \rho<1\). Then 
\[\int_{0}^{1} \frac{(1-r)^{\delta}}{(1-\rho r)^{\delta+c+1}} d r \asymp \frac{1}{(1-\rho)^{c}}.\]
\end{Lemma}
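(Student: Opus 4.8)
The plan is to reduce the two-parameter estimate to a scale-invariant one-variable integral and then separate the regime where $\rho$ stays bounded away from $1$ from the regime where $\rho\to 1^-$. First I would substitute $u=1-r$, which turns the denominator into $1-\rho r=(1-\rho)+\rho u$; writing $a=1-\rho\in(0,1]$, the quantity to estimate becomes
\[
\int_{0}^{1}\frac{(1-r)^{\delta}}{(1-\rho r)^{\delta+c+1}}\,dr=\int_{0}^{1}\frac{u^{\delta}}{\big(a+\rho u\big)^{\delta+c+1}}\,du ,
\]
and the goal is to show the right-hand side is comparable to $a^{-c}$.

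When $\rho\le\tfrac12$ (so $a\ge\tfrac12$), the denominator satisfies $a\le a+\rho u\le a+\rho=1$, hence $(a+\rho u)^{\delta+c+1}\asymp 1$ uniformly; since also $a^{-c}\asymp 1$ on this range, both sides are comparable to $\int_{0}^{1}u^{\delta}\,du=\tfrac{1}{\delta+1}$, where $\delta>-1$ guarantees convergence at the origin. This disposes of the easy regime at once.

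The main work is the regime $\rho>\tfrac12$, where $a\to 0$ is allowed. Here I would rescale by $s=\rho u/a$, under which $a+\rho u=a(1+s)$ and the upper limit becomes $\rho/a>1$; a direct computation gives
\[
\int_{0}^{1}\frac{u^{\delta}}{(a+\rho u)^{\delta+c+1}}\,du=\frac{1}{\rho^{\delta+1}}\,a^{-c}\int_{0}^{\rho/a}\frac{s^{\delta}}{(1+s)^{\delta+c+1}}\,ds .
\]
Since $\rho\in(\tfrac12,1)$ forces $\rho^{\delta+1}\asymp 1$, it remains to bound the last integral above and below by positive constants independent of $\rho$. The point — and the only genuinely delicate step — is this uniformity: the integrand is integrable at $0$ because $\delta>-1$ and decays like $s^{-c-1}$ at infinity with $c>0$, so $\int_{0}^{\infty}\frac{s^{\delta}}{(1+s)^{\delta+c+1}}\,ds$ is a finite positive constant bounding the truncated integral from above, while $\int_{0}^{\rho/a}\ge\int_{0}^{1}$ (using $\rho/a>1$) bounds it from below. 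Combining the two regimes yields $\asymp a^{-c}=(1-\rho)^{-c}$, as claimed. The hypotheses $\delta>-1$ and $c>0$ enter precisely at these two convergence endpoints, which is exactly where an attempt to weaken them would break down.
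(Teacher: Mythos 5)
Your argument is correct, and there is nothing in the paper to compare it against: Lemma \ref{le2.3} is imported verbatim from \cite[Lemma 2.2]{zgsl} with no proof given here, so your write-up supplies a proof the paper deliberately omits. Each step checks out: the substitution $u=1-r$ correctly turns the kernel into $(a+\rho u)^{\delta+c+1}$ with $a=1-\rho$; in the regime $\rho\le\tfrac12$ the exponent $\delta+c+1>0$ makes the denominator comparable to $1$, and $\delta>-1$ makes $\int_0^1 u^\delta\,du$ finite, so both sides are $\asymp 1$; in the regime $\rho>\tfrac12$ the rescaling $s=\rho u/a$ gives exactly the factor $\rho^{-(\delta+1)}a^{-c}$ you state, $\rho^{\delta+1}\asymp 1$ since $\delta+1>0$, and the truncated integral $\int_0^{\rho/a}s^\delta(1+s)^{-\delta-c-1}\,ds$ is pinched between $\int_0^1$ and $\int_0^\infty$, both finite and positive precisely because $\delta>-1$ (convergence at $0$) and $c>0$ (decay $s^{-c-1}$ at infinity); the needed inequality $\rho/a>1$ does follow from $\rho>\tfrac12$. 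For comparison, the proof one usually finds in the literature (and presumably in \cite{zgsl}) instead uses the elementary two-sided bound $1-\rho r\asymp(1-\rho)+(1-r)$ and splits the integral at $r=\rho$, or a dyadic decomposition of $[0,1)$; that route is essentially equivalent in content, while your rescaling has the merit of isolating the uniformity question in a single scale-invariant integral and making transparent exactly where each hypothesis enters.
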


The following lemma gives an equivalent characterization  for functions in $AT_p^\infty(\a)$ and can be found in \cite{cw}.
\begin{Lemma}\label{at}
Let $0<p<\infty$ and $\a>-2$.
Then $g\in AT_p^\infty(\a)$ if and only if 
$$
\sup_{b \in \mathbb{D}}  \int_{\mathbb{D}} |g'(w)|^p (1 - |w|^2)^{p+\a} (1 - |\varphi_b(w)|^2) dA(w)<\infty.
$$
\end{Lemma}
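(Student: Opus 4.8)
The plan is to recognise the right-hand side as the integral characterization of Lemma \ref{alp} applied to $g'$, and then to prove a Littlewood--Paley type equivalence between the size of $g$ and of $g'$. Starting from the identity $1-|\varphi_b(w)|^2=\frac{(1-|b|^2)(1-|w|^2)}{|1-\ol{b}w|^2}$, the expression in the statement becomes
\[
\sup_{b\in\D}\int_{\D}\frac{1-|b|^2}{|1-\ol{b}w|^{2}}\,|g'(w)|^p(1-|w|^2)^{p+\a+1}\,dA(w).
\]
Since $p+\a>-2$, Lemma \ref{alp} (with weight parameter $p+\a$ and the admissible choice $t=1$) says that this supremum is finite precisely when $g'\in AT_p^\infty(p+\a)$. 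Thus the asserted equivalence reduces to
\[
g\in AT_p^\infty(\a)\quad\Longleftrightarrow\quad g'\in AT_p^\infty(p+\a),
\]
which I would establish by proving the two implications separately, freely switching the parameter $t$ in Lemma \ref{alp} whenever convenient.

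For necessity I would exploit the subharmonicity of $|g'|^p$. Letting $D(w)$ be a pseudohyperbolic disk of fixed radius centred at $w$, the Cauchy estimate together with the sub-mean value property yields, for every $0<p<\infty$,
\[
|g'(w)|^p\lesssim\frac{1}{(1-|w|^2)^{p+2}}\int_{D(w)}|g(\zeta)|^p\,dA(\zeta).
\]
Inserting this into the weighted integral above and using that on $D(w)$ the quantities $1-|w|^2$ and $1-|\zeta|^2$, and likewise $|1-\ol{b}w|$ and $|1-\ol{b}\zeta|$, are comparable, an application of Fubini's theorem (the set of $w$ with $\zeta\in D(w)$ has area $\asymp(1-|\zeta|^2)^2$) collapses the double integral to
\[
\int_{\D}\frac{1-|b|^2}{|1-\ol{b}\zeta|^{2}}\,|g(\zeta)|^p(1-|\zeta|^2)^{\a+1}\,dA(\zeta),
\]
which is uniformly bounded in $b$ by Lemma \ref{alp} ($t=1$), exactly because $g\in AT_p^\infty(\a)$.

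For sufficiency I would recover $g$ from $g'$ through the primitive $g(z)=g(0)+z\int_0^1 g'(tz)\,dt$; the constant $g(0)$ contributes a finite term (Lemma \ref{le2.5} gives $\sup_{b}\int_{\D}\frac{1-|b|^2}{|1-\ol{b}z|^{2}}(1-|z|^2)^{\a+1}dA(z)<\infty$ for $\a>-2$), so it remains to bound the weighted integral of $\bigl(\int_0^1|g'(tz)|\,dt\bigr)^p$. The delicate point---and the main obstacle---is that the supremum over $b$ must be retained throughout: the crude pointwise bound for $g'$ from Lemma \ref{az} discards the Carleson structure and leads to a divergent estimate. Instead I would view $z\mapsto g'(tz)$ as a composition (dilation) operator and invoke the bound of Lemma \ref{l3.1a}, which is uniform in $t\in(0,1)$, combining it with Minkowski's inequality in the $t$-variable when $p\ge1$. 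The genuinely hard case is $0<p<1$, where Minkowski is unavailable and Lemma \ref{l3.1a} may no longer apply to the shifted weight $p+\a$; there one must instead pass through the subharmonic (atomic) decomposition or a weighted reproducing representation of $g$ in terms of $g'$ and evaluate the arising kernels by means of Lemma \ref{le2.2} and Lemma \ref{le2.3}. Preserving the supremum over $b$ while integrating a primitive of $g'$ is where essentially all the difficulty of the statement is concentrated.
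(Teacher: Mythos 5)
First, a point of reference: the paper does not prove this lemma at all --- it is quoted from Chen and Wang \cite{cw}, so there is no internal proof to compare against, and your attempt has to stand on its own. Your reduction is sound: writing $1-|\varphi_b(w)|^2=\frac{(1-|b|^2)(1-|w|^2)}{|1-\overline{b}w|^2}$ and invoking Lemma \ref{alp} with $t=1$ (legitimate, since $p+\alpha>-2$ and the lemma allows any fixed $t>0$) correctly identifies the displayed condition with $g'\in AT_p^\infty(p+\alpha)$, so the lemma is indeed equivalent to the Littlewood--Paley shift $g\in AT_p^\infty(\alpha)\Leftrightarrow g'\in AT_p^\infty(p+\alpha)$. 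Your necessity argument is complete and valid for all $0<p<\infty$: the exponents in the subharmonicity estimate $|g'(w)|^p\lesssim(1-|w|^2)^{-p-2}\int_{D(w)}|g|^p\,dA$ combine with the Fubini count $|\{w:\zeta\in D(w)\}|\asymp(1-|\zeta|^2)^2$ to reproduce exactly the weight $(1-|\zeta|^2)^{\alpha+1}$, and the comparability of $|1-\overline{b}w|$ and $|1-\overline{b}\zeta|$ on $D(w)$ preserves the supremum in $b$.

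The genuine gap is the one you yourself flag: for $0<p<1$ your proof does not exist. Minkowski's integral inequality fails, and the fallback you name (atomic decomposition or a weighted reproducing formula for $g$ in terms of $g'$, estimated via Lemmas \ref{le2.2} and \ref{le2.3}) is only gestured at, not executed; since the lemma is stated for all $0<p<\infty$, what you have is a proof of a weaker statement. Two smaller checkpoints in your $p\ge1$ argument also deserve a sentence each. First, applying Lemma \ref{l3.1a} to $g'$ means using it at the shifted weight $p+\alpha$, and that lemma hypothesizes the weight exceeds $-1$; this holds precisely because $p\ge1$ and $\alpha>-2$ give $p+\alpha>-1$, so your restriction to $p\ge1$ is doing real work twice. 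Second, Lemma \ref{l3.1a} as stated is a bound for each fixed $w\in\mathbb{D}$, and you need it uniformly in the dilation parameter; for your application $w=t\in(0,1)$ is real, and the final quotient in its proof, $\frac{(1-|u|^2)^{t-1/p}|1-\overline{a}u|^{2/p}}{(1-|ut|^2)^{t-1/p}|1-\overline{a}ut|^{2/p}}$, is uniformly bounded for real $t$ (compare the cases $|1-\overline{a}u|\lesssim 1-t$ and $|1-\overline{a}u|\gtrsim 1-t$), though this is not automatic for complex $w$ and should be said explicitly. With those two remarks supplied, your argument fully covers $1\le p<\infty$ --- which happens to be the only range in which the paper actually uses Lemma \ref{at} --- but it does not prove the lemma as stated.
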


The following lemma is useful for studying compactness. Its proof is similar to Proposition 3.11 in \cite{cm}, so details are omitted. 

\begin{Lemma}\label{com}	Let $0<p<\infty$ and $\a>-2$.	Let \(T : AT_p^\infty(\a) \to AT_p^\infty(\a)\) be a bounded linear operator. Then \(T \) is compact if and only if for any bounded sequence \(\{f_j\}\) in \(AT_p^\infty(\a)\) which converges to zero uniformly on compact subsets of \(\mathbb{D}\), %we have \(T f_j \to 0\) in \(AT_p^\infty(\a)\) as \(j \to \infty\).
	\[\lim _{j \to \infty}\left\|Tf_{j}\right\|_{AT_p^\infty(\a)}=0.\]
\end{Lemma}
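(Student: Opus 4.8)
The plan is to follow the standard normal-families template (as in \cite[Proposition 3.11]{cm}), using Lemma \ref{az} as the bridge between the norm topology of $AT_p^\infty(\a)$ and the topology of uniform convergence on compact subsets of $\D$. The two facts I would extract from Lemma \ref{az} at the outset are: (a) since $|f(z)|\lesssim \|f\|_{AT_p^\infty(\a)}/(1-|z|^2)^{(\a+2)/p}$, norm convergence $\|f_j-f\|_{AT_p^\infty(\a)}\to 0$ forces $f_j\to f$ uniformly on each compact $K\subset\D$; and (b) a norm-bounded subset of $AT_p^\infty(\a)$ is uniformly bounded on every compact subset, hence is a normal family by Montel's theorem. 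With these in hand the two implications are handled separately.

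For the \emph{sufficiency} direction I would assume the sequential criterion and prove compactness directly. Let $\{f_j\}$ be an arbitrary sequence with $\sup_j\|f_j\|_{AT_p^\infty(\a)}\le M$. By (b) and Montel's theorem I can pass to a subsequence $\{f_{j_k}\}$ converging uniformly on compact subsets to some $f\in H(\D)$. The first thing to check is that $f\in AT_p^\infty(\a)$ with $\|f\|_{AT_p^\infty(\a)}\le M$: this I would obtain by applying Fatou's lemma to the integral characterization in Lemma \ref{alp}, using that $f_{j_k}\to f$ pointwise and that the weight $(1-|a|^2)^t|1-\bar a z|^{-(t+1)}(1-|z|^2)^{\a+1}$ is nonnegative. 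Then $g_k:=f_{j_k}-f$ is bounded in $AT_p^\infty(\a)$ and converges to $0$ uniformly on compact subsets, so the hypothesis gives $\|Tg_k\|_{AT_p^\infty(\a)}=\|Tf_{j_k}-Tf\|_{AT_p^\infty(\a)}\to 0$. Thus every bounded sequence admits a subsequence whose $T$-image converges in norm, which is exactly the compactness of $T$.

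For the \emph{necessity} direction I would assume $T$ compact and argue by contradiction. If $\{f_j\}$ is bounded and $f_j\to 0$ uniformly on compact subsets but $\|Tf_j\|_{AT_p^\infty(\a)}\not\to 0$, I pass to a subsequence with $\|Tf_{j_k}\|_{AT_p^\infty(\a)}\ge\varepsilon>0$. Compactness of $T$ then yields a further subsequence with $Tf_{j_{k_l}}\to g$ in norm and $\|g\|_{AT_p^\infty(\a)}\ge\varepsilon$. By fact (a) this convergence is also uniform on compact subsets, so in particular $(Tf_{j_{k_l}})(z)\to g(z)$ for every $z\in\D$. To reach the contradiction I must show $g\equiv 0$, i.e. that $(Tf_{j_{k_l}})(z)\to 0$ pointwise; this uses that for each fixed $z$ the functional $h\mapsto (Th)(z)$ is continuous with respect to uniform-on-compacta convergence of norm-bounded sequences, together with $f_{j_{k_l}}\to 0$ uniformly on compacta. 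This forces $g\equiv0$, contradicting $\|g\|_{AT_p^\infty(\a)}\ge\varepsilon$, and hence $\|Tf_j\|_{AT_p^\infty(\a)}\to 0$.

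The step I expect to be the main obstacle is precisely this last one: pinning the norm-limit $g$ to be zero in the necessity direction. Everything else is a routine transfer between the compact-open and norm topologies powered by Lemma \ref{az} and a single application of Fatou's lemma, but establishing $g\equiv0$ requires knowing that $T$ sends norm-bounded, uniformly-on-compacta-null sequences to pointwise-null sequences, a continuity property that must be read off from the explicit form of $T$ rather than from abstract boundedness alone. For the Ces\`aro-like operator $\mathcal{C}_\mu$ this is transparent, since $(\mathcal{C}_\mu h)(z)=\int_0^1 \frac{h(tz)}{1-tz}\,d\mu(t)$ depends only on the values of $h$ on compact subsets of $\D$, so dominated convergence yields the required pointwise vanishing.
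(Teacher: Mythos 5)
Your sufficiency argument is exactly the standard route the paper points to (the proof is omitted in the paper, which simply cites \cite[Proposition 3.11]{cm}): Lemma \ref{az} gives uniform boundedness on compacta, hence normality; Fatou's lemma applied to the characterization in Lemma \ref{alp} places the ucc-limit $f$ in $AT_p^\infty(\a)$ with norm at most $M$; and the hypothesis applied to $f_{j_k}-f$ yields norm convergence of $Tf_{j_k}$. That half is complete and correct, and identical in substance to the cited template.

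The obstacle you flag in the necessity direction is a genuine gap --- not just in your write-up, but in the lemma as stated. For an \emph{arbitrary} bounded linear $T$, the step $g\equiv 0$ cannot be extracted from boundedness alone: one needs either that $T$ send norm-bounded, ucc-null sequences to pointwise-null sequences, or that such sequences be weakly null in $AT_p^\infty(\a)$ (compact operators map weakly null sequences to norm-null ones). Neither holds automatically on a non-separable sup-type space. A cautionary example on $H^\infty$: let $\Lambda(f)=\lim_{\mathcal U} f(r_n)$ along a free ultrafilter, with $r_n\to 1^-$, and $Tf=\Lambda(f)\,g_0$ with $g_0\neq 0$. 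Then $T$ is rank one, hence compact, while $z^j\to 0$ ucc with $\|z^j\|_\infty=1$ and $\Lambda(z^j)=1$ for every $j$, so $\|Tz^j\|=\|g_0\|\not\to 0$; the ``only if'' half of the sequential criterion fails. The same phenomenon is to be expected on $AT_p^\infty(\a)$, which is likewise non-separable and contains bounded ucc-null sequences with norms bounded below (the test functions $f_\rho$ of Section 4). Note that \cite[Proposition 3.11]{cm} is stated for composition operators, where the missing continuity is automatic since $C_\varphi f_n(z)=f_n(\varphi(z))\to 0$ pointwise; the paper's blanket statement for all bounded $T$ inherits the same lacuna as your proof. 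Your operator-specific patch is therefore the right and necessary repair: for $T=\mathcal{C}_\mu$ --- the only operator to which the paper applies Lemma \ref{com} --- fixing $z\in\D$, the set $\{tz: t\in[0,1]\}$ is a compact subset of $\D$ and $|1-tz|\ge 1-|z|$, so $h_j\to 0$ ucc and $\mu$ finite already give $(\mathcal{C}_\mu h_j)(z)\to 0$ (uniform convergence on that compact set suffices; dominated convergence is not even needed). With that verification added, the lemma holds in the generality actually used in the paper, though not in the generality in which it is stated.
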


Now, we are in a position to state and prove the main results of this section. Specifically, we will characterize the boundedness and compactness of the Ces\`aro-like operator \(\mathcal{C}_\mu\) which maps from \(AT_p^\infty(\a)\) to \(AT_p^\infty(\a)\).

\begin{Theorem}\label{cmub}
Let \(1 \leq p<\infty\) and \(\a>-2\). Let \(\mu\) be a finite positive Borel measure on $[0, 1)$. The Ces\`aro-like operator \(\mathcal{C}_\mu\) is bounded on \(AT_p^\infty(\a)\) if and only if \(\mu\) is a Carleson measure.
\end{Theorem}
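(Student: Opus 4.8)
The plan is to prove the two implications separately, handling necessity first because it pins down why the critical Carleson exponent is $s=1$.

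\medskip

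\noindent\emph{Necessity (bounded $\Rightarrow$ Carleson).} I would test $\mathcal{C}_\mu$ against the standard family
\[
f_\rho(z)=\frac{(1-\rho^2)^{c}}{(1-\rho z)^{\,c+\frac{\a+2}{p}}},\qquad \rho\in(0,1),
\]
with $c>\frac1p$. A routine verification using the characterization of Lemma \ref{alp} (with parameter $0<\tau<\a+2$) together with the two‑kernel estimate of Lemma \ref{le2.5} shows $\sup_{\rho}\|f_\rho\|_{AT_p^\infty(\a)}<\infty$; indeed the resulting integral reduces to $(1-|a|^2)^{\tau}(1-\rho^2)\,|1-\ol a\rho|^{-(\tau+1)}$, which is bounded uniformly in $a$ since $1-\rho\le|1-\ol a\rho|$. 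Assuming $\mathcal{C}_\mu$ bounded, $\|\mathcal{C}_\mu f_\rho\|_{AT_p^\infty(\a)}\lesssim1$. Evaluating
\[
\mathcal{C}_\mu f_\rho(\rho)=\int_0^1\frac{(1-\rho^2)^{c}}{(1-\rho^2t)^{\,c+\frac{\a+2}{p}}(1-\rho t)}\,d\mu(t)
\]
and restricting the integral to $t\in[\rho,1)$, where $1-\rho^2t\asymp1-\rho t\asymp1-\rho$, gives the lower bound $\mathcal{C}_\mu f_\rho(\rho)\gtrsim(1-\rho)^{-\frac{\a+2}{p}-1}\mu([\rho,1))$. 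On the other hand the pointwise growth estimate of Lemma \ref{az} (with $n=0$) yields $|\mathcal{C}_\mu f_\rho(\rho)|\lesssim\|\mathcal{C}_\mu f_\rho\|(1-\rho)^{-\frac{\a+2}{p}}\lesssim(1-\rho)^{-\frac{\a+2}{p}}$. Comparing the two bounds forces $\mu([\rho,1))\lesssim1-\rho$, i.e.\ $\mu$ is a Carleson measure.

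\medskip

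\noindent\emph{Sufficiency (Carleson $\Rightarrow$ bounded).} Assume $\mu$ is a Carleson measure and fix a parameter $\tau>0$ in Lemma \ref{alp}. For fixed $a\in\D$ I would apply Minkowski's integral inequality (legitimate since $p\ge1$) to move the $d\mu(t)$ integration outside the $L^p(dA)$ norm:
\[
\left(\int_\D\frac{(1-|a|^2)^{\tau}}{|1-\ol az|^{\tau+1}}|\mathcal{C}_\mu f(z)|^{p}(1-|z|^2)^{\a+1}dA(z)\right)^{\frac1p}\le\int_0^1 J(t,a)^{\frac1p}\,d\mu(t),
\]
where $J(t,a)=\int_\D\frac{(1-|a|^2)^{\tau}|f(tz)|^{p}(1-|z|^2)^{\a+1}}{|1-\ol az|^{\tau+1}|1-tz|^{p}}dA(z)$. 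Inserting the pointwise bound $|f(tz)|^{p}\lesssim\|f\|_{AT_p^\infty(\a)}^{p}(1-t^2|z|^2)^{-(\a+2)}$ from Lemma \ref{az} reduces $J(t,a)$ to an explicit weighted integral with no reference to $f$.

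\medskip

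\noindent The technical heart is the evaluation of this integral. Passing to polar coordinates $z=re^{i\t}$, the angular integral is precisely of the type treated in Lemma \ref{le2.2}, with kernels $|1-\ol a re^{i\t}|^{-(\tau+1)}$ and $|1-tre^{i\t}|^{-p}$: the generic case $p>1$ uses part (i), while the borderline case $p=1$ uses part (ii) and produces a logarithmic factor that must be carried through. After estimating $|1-atr^2|\ge1-|a|tr^2$, the remaining radial integral (after the substitution $u=r^2$) is handled by Lemma \ref{le2.3}. The outcome is a bound of the form $J(t,a)^{1/p}\lesssim\|f\|_{AT_p^\infty(\a)}\cdot\frac{(1-|a|)^{r}}{(1-t)^{c}(1-|a|t)^{1+r-c}}$, which is exactly the integrand appearing in Lemma \ref{le2.4} with $s=1$. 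Substituting this into $\int_0^1 J(t,a)^{1/p}d\mu(t)$ and taking $\sup_{a\in\D}$, the Carleson hypothesis and Lemma \ref{le2.4} give $\|\mathcal{C}_\mu f\|_{AT_p^\infty(\a)}\lesssim\|f\|_{AT_p^\infty(\a)}$. I expect the main obstacle to be the radial integral, in which the denominator carries the three competing factors $(1-t^2u)$, $(1-|a|^2u)$ and $(1-|a|tu)$; the parameters $\tau$ and $c$ must be chosen so that, after applying Lemma \ref{le2.3}, the surviving exponents line up with the critical Carleson exponent $s=1$ in Lemma \ref{le2.4}, and the $p=1$ logarithmic term must be shown to be harmless.
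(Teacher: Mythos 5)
Your proposal is correct and follows the paper's overall strategy, but your sufficiency argument is implemented through a genuinely different norm characterization. For necessity, you do exactly what the paper does: test against the family \((1-\rho)^{c}/(1-\rho z)^{c+\frac{\a+2}{p}}\)-type functions, bound \(|\mathcal{C}_\mu f_\rho(\rho)|\) above via Lemma \ref{az} and below by restricting the (positive) integral to \(t\in[\rho,1)\); the paper fixes \(c=1\), while your choice \(c>\frac1p\) keeps the exponent \(pc+1\) strictly above \(2\) in Lemma \ref{le2.5}, which is actually slightly more robust: at \(p=1\) the paper's normalization \((1-\rho)/(1-\rho z)^{\a+3}\) lands on the borderline \(t-s=2\) not covered by Lemma \ref{le2.5}, a point the paper passes over with ``after calculation.'' For sufficiency, the paper passes to the derivative characterization of Lemma \ref{at} (hence the reduction to \(f(0)=0\)), computes \(\mathcal{C}_\mu(f)'\), and bounds both \(f(tz)\) and \(f'(tz)\) pointwise by Lemma \ref{az}; you instead stay with the direct characterization of Lemma \ref{alp}, apply Minkowski to \(\mathcal{C}_\mu f\) itself, and insert only the growth bound \(|f(tz)|\lesssim\|f\|(1-t|z|)^{-\frac{\a+2}{p}}\). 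The two reductions produce \(f\)-free kernel integrals with identical net homogeneity: your angular integral has kernel exponents \(\tau+1\) and \(p\) (versus the paper's \(2\) and \(p\), i.e.\ your \(\tau=1\)), so Lemma \ref{le2.2}(i) for \(p>1\) and (ii) for \(p=1\) apply verbatim, the radial integrals are handled by Lemma \ref{le2.3} after the same \((1-t)^{p\epsilon}\), \((1-|a|)^{p\tau}\)-splittings the paper uses for its \(J_1,J_2\) (and the \(p=1\) logarithm is killed by the same \((1-|\varphi_{ar}(tr)|^2)^d\log\frac{e}{1-|\varphi_{ar}(tr)|^2}\lesssim1\) device as in \(J_4\)), and the surviving exponents line up exactly with \(s=1\) in Lemma \ref{le2.4}, as you predicted. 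What each route buys: yours avoids Lemma \ref{at}, the derivative bookkeeping with exponents \(p+\a\), and the \(f(0)=0\) reduction, so it is marginally leaner; the paper's derivative route is the standard template for integration-type operators and its \(H_1/H_2\)-style splitting is reused directly in the subsequent compactness theorem, which your formulation would have to adapt.
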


\begin{proof} First we assume that \(\mathcal{C}_\mu\) is bounded on \(AT_p^\infty(\a)\). For $0<\rho<1$,  let 
\[f_{\rho}(z)=\frac{(1-\rho)}{(1-\rho z)^{ \frac{\a+2}{p}+1}}, \quad z\in \D.\]
After calculation, we see that \(f_{\rho} \in AT_p^\infty(\a)\) and \(\sup _{0<\rho<1}\left\|f_{\rho}\right\|_{AT_p^\infty(\a)} \lesssim 1\). This implies that \(\mathcal{C}_\mu(f_{\rho}) \in AT_p^\infty(\a)\).
Using Lemma \ref{az}, we have 
\[
\left|\mathcal{C}_{\mu}\left(f_{\rho}\right)(\rho)\right| \lesssim \frac{1}{(1-\rho)^{ \frac{\a+2}{p}}},\quad 0<\rho<1.
\]
Then, for \(\frac{1}{2}<\rho<1\), 
\[\begin{aligned} 
\frac{1}{(1-\rho)^{ \frac{\a+2}{p}}} & \gtrsim\left|\int_{0}^{1} \frac{(1-\rho)}{(1-t \rho)\left(1-t \rho^{2}\right)^{ \frac{\a+2}{p}+1}} d \mu(t)\right| \\ 
& \gtrsim \int_{\rho}^{1} \frac{(1-\rho)}{(1-t \rho)\left(1-t \rho^{2}\right)^{ \frac{\a+2}{p}+1}} d \mu(t)   \gtrsim \frac{\mu([\rho, 1))}{(1-\rho)^{ \frac{\a+2}{p}+1}},
\end{aligned}\]
which implies that $\mu([\rho, 1)) \lesssim 1-\rho $
for all $\frac{1}{2}<\rho<1$.  Hence, \(\mu\) is a Carleson measure.
	
Conversely, assume that \(\mu\) is a Carleson measure.
Let $f\in AT_p^\infty(\a)$.
Without loss of generality, we may assume \(f(0)=0\). By Lemma \ref{az},
we get 
%\[
%|f(z)| \lesssim \frac{\| f\| _{AT_p^\infty(\a)}}{(1-|z|^2)^{ \frac{\a+2}{p}}},
%\]
%and hence 
%\[
%\left|f'(z)\right| \lesssim \frac{\| f\| _{AT_p^\infty(\a)}}{(1-|z|^2)^{ \frac{\a+2}{p}+1}}.
%\]
%The above inequalities 
%and the integral form of \(\mathcal{C}_\mu(f)\) imply that 
\[\begin{aligned} 
 \left|\mathcal{C}_{\mu}(f)'(z)\right|  
=&\left|\int_{0}^{1} \frac{t f'(t z)}{1-t z}+\frac{t f(t z)}{(1-t z)^{2}} d \mu(t)\right| \\ 
\leq &\int_{0}^{1} \frac{\left|t f'(t z)\right|}{|1-t z|} d \mu(t)+\int_{0}^{1} \frac{|t f(t z)|}{|1-t z|^{2}} d \mu(t) \\ 
\lesssim& \| f\| _{AT_p^\infty(\a)}\left( \int_{0}^{1} \frac{d \mu(t)}{|1-t z|(1-t|z|)^{ \frac{\a+2}{p}+1}}+ \int_{0}^{1} \frac{d \mu(t)}{|1-t z|^{2}(1-t|z|)^{ \frac{\a+2}{p}}}\right) \\ 
\lesssim&\| f\| _{AT_p^\infty(\a)} \int_{0}^{1} \frac{d \mu(t)}{|1-t z|(1-t|z|)^{ \frac{\a+2}{p}+1}}. 
\end{aligned}\]
Using Lemma \ref{at} and Minkowski’s inequality, we obtain
\[
\begin{aligned}
 &\| \mathcal{C}_\mu (f) \|_{AT_p^\infty(\a)}^p  
\asymp  \sup_{b \in \mathbb{D}}  \int_{\mathbb{D}} |\mathcal{C}_\mu (f)'(z)|^p (1 - |z|^2)^{p+\a} (1 - |\varphi_b(z)|^2) dA(z)  \\
\lesssim& \| f \|_{AT_p^\infty(\a)}^p \sup_{b \in \mathbb{D}} (1 - |b|^2)  \int_{\mathbb{D}} \left( \int_0^1 \frac{d\mu(t)}{|1 - tz|(1 - t|z|)^{ \frac{\a+2}{p}+1}} \right)^p \frac{(1 - |z|^2)^{p+\a+1}}{|1 - \overline{b}z|^{2}} dA(z) \\
\lesssim& \| f \|_{AT_p^\infty(\a)}^p \sup_{b \in \mathbb{D}} (1-|b|^2) \left( \int_0^1 \left( \int_{\mathbb{D}} \frac{(1 - |z|^2)^{p+\a+1} dA(z)}{|1 - tz|^p(1 - t|z|)^{p+\a+2}  |1 - \overline{b}z|^{2}} \right)^{\frac{1}{p}} d\mu(t) \right)^{p}.
\end{aligned}
\] 	
To complete the proof, it suffices to prove 
\[
\sup_{b \in \mathbb{D}} (1-|b|^2) \left( \int_0^1 \left( \int_{\mathbb{D}} \frac{(1 - |z|^2)^{p+\a+1} dA(z)}{|1 - tz|^p(1 - t|z|)^{p+\a+2}  |1 - \overline{b}z|^{2}} \right)^{\frac{1}{p}} d\mu(t) \right)^{p}< \infty.
\]

{\bf Case \( p > 1 \).}
Using the polar coordinate formula and Lemma \ref{le2.2} (i), we get
\[
\begin{aligned}
&\int_{\mathbb{D}} \frac{(1 - |z|^2)^{p+\a+1} }{|1 - tz|^p(1 - t|z|)^{p+\a+2}  |1 - \overline{b}z|^{2}}dA(z) \\
\asymp& \int_0^1 \frac{(1 - r)^{p+\a+1}}{(1 - tr)^{p+\a+2}} \int_0^{2\pi} \frac{d\theta}{|1 - tre^{i\theta}|^p |1 - \overline{b}re^{i\theta}|^{2}} dr \\
\asymp& \int_0^1 \frac{(1 - r)^{p+\a+1}}{(1 - tr)^{p+\a+2}} \left( \frac{1}{(1 - tr)^{p - 1} |1 - t\overline{b}r^2|^{2}} + \frac{1}{(1 - |b|r) |1 - t\overline{b}r^2|^p} \right) dr \\
=&\int_{0}^{1}\frac{(1-r)^{p+\a+1}}{(1-tr)^{2p+\a+1}|1-t\ol{b}r^2|^2}dr+\int_0^1 \frac{(1 - r)^{p+\a+1}}{(1 - tr)^{p+\a+2}(1-|b|r)|1-t\ol{b}r^2|^p}dr\\
=& J_1+J_2.
\end{aligned}
\] 
Using the fact that 
\begin{align}\label{xy}
(x + y)^p \leqslant 
\begin{cases} 
	x^p + y^p, & 0<p<1, \\
	2^{p-1}(x^p + y^p), & p \ge 1 
\end{cases}, \quad x,y>0,
\end{align}
we obtain
\[
\begin{aligned}
&\sup_{b \in \mathbb{D}} (1-|b|^2) \left( \int_0^1 \left( \int_{\mathbb{D}} \frac{(1 - |z|^2)^{p+\a+1} dA(z)}{|1 - tz|^p(1 - t|z|)^{p+\a+2}  |1 - \overline{b}z|^{2}} \right)^{\frac{1}{p}} d\mu(t) \right)^{p}\\
\lesssim& \sup_{b \in \mathbb{D}} (1-|b|^2) \left(\int_0^1 (J_1 + J_2)^{\frac{1}{p}} d\mu(t)\right)^p \\
\lesssim& \sup_{b \in \mathbb{D}} (1-|b|^2) \left(\int_0^1 J_1^{\frac{1}{p}} d\mu(t)\right)^p + \sup_{b \in \mathbb{D}} (1-|b|^2) \left(\int_0^1 J_2^{\frac{1}{p}} d\mu(t)\right)^p.
\end{aligned}
\]
It is easy to see that \( 1 - r < 1 - tr \), \( 1 - t < 1 - tr \) and \(|1-t\ol{b}r^2|\ge 1-t|b|r\).
Therefore, we can choose a positive real number \( \epsilon \) that satisfies
 \[ \frac{p-1}{p}< \epsilon <1. \] 
%\[ \max\left\{0, \frac{p-1}{p}\right\} < \epsilon < \min\left\{1, \frac{p+1}{p}\right\} .\] 
By the choice of \( \epsilon \), it is easy to check that \( p\epsilon-p> -1 \) and \( 2 - (p\epsilon-p) - 1 = 1-p\epsilon+p > 0 \). 
Using Lemma \ref{le2.3}, we get
\[
\begin{aligned}
J_1 &= \int_0^1 \frac{(1 - r)^{p+\a+1}}{(1 - tr)^{2p+\a+1} |1 - t\overline{b}r^2|^{2}}dr \\
&\leq \int_0^1 \frac{1}{(1 - tr)^{p} |1 - t\overline{b}r^2|^{2}}dr \\
&\leq \int_0^1 \frac{1}{(1 - t)^{p\epsilon} (1 - tr)^{p-p\epsilon} |1 - t\overline{b}r^2|^{2}}dr \\
&\lesssim \frac{1}{(1 - t)^{p\epsilon}} \int_0^1 \frac{(1 - r)^{p\epsilon-p}}{(1 - t|b|r)^{2}} dr\\
&\lesssim \frac{1}{(1 - t)^{p\epsilon} (1 - t|b|)^{1-p\epsilon+p}}.
\end{aligned}
\]
Since $\mu$ is a Carleson measure, using Lemma \ref{le2.4}, we have
\[
\begin{aligned}
&\sup_{b \in \mathbb{D}} (1-|b|^2) \left(\int_0^1 J_1^{\frac{1}{p}} d\mu(t) \right)^p\\
\lesssim &
\sup_{b \in \mathbb{D}} \left( \int_0^1 \frac{(1 - |b|)^{\frac{1}{p}}}{(1 - t)^{\epsilon} (1 - t|b|)^{\frac{1}{p}-\epsilon+1}} d\mu(t) \right)^p
\lesssim 1.
\end{aligned}
\]   
Subsequently, we focus our efforts on the estimation of \(J_2\).

Let \( 0 < \delta <1\) 
and 
\( 0 < \tau < \frac{1}{p} \). We may choose \( \delta \) and \( \tau \) such that \( \frac{1}{p} < \delta + \tau < \frac{p+1}{p} \). Notice that \( 1 -r  < 1 - |b|r \) and \( 1 - |b| < 1 - |b|r \). By the choices of \( \delta \) and  \( \tau \), it is easy to see that \( p(\delta+\tau)-2> -1 \) and \( p-[p(\delta+\tau)-2]-1 = p-p(\delta+\tau)+1> 0 \). Using Lemma \ref{le2.3}, it follows that
\[
\begin{aligned}
J_2 &= \int_0^1 \frac{(1 - r)^{p+\a+1}}{(1 - tr)^{p+\a+2}(1 - |b|r)|1 - t\overline{b}r^2|^p} dr \\
&\leq \int_0^1 \frac{(1 - r)^{p+\a+1}}{(1 - t)^{p\delta}(1 - tr)^{p+\a+2-p\delta}(1 - |b|)^{p\tau}(1 - |b|r)^{ 1- p\tau}|1 - t\overline{b}r^2|^p} dr \\
&\leq \frac{1}{(1 - t)^{p\delta}(1 - |b|)^{p\tau}} \int_0^1 \frac{(1 - r)^{p+\a+1}}{(1 - r)^{p+\a+3- p\delta- p\tau}|1 - t\overline{b}r^2|^p} dr \\
&\lesssim \frac{1}{(1 - t)^{p\delta}(1 - |b|)^{p\tau}} \int_0^1 \frac{(1 - r)^{p(\delta+\tau)-2}}{(1 - t|b|r)^p} dr\\
&\lesssim \frac{1}{(1 - t)^{p\delta}(1 - |b|)^{p\tau}(1 - t|b|)^{p-p(\delta+\tau)+1}}.
\end{aligned}
\] 
Since \( \mu \) is a Carleson measure, using Lemma \ref{le2.4}, we get 
\[
\begin{aligned}
	&\sup_{b \in \mathbb{D}} (1-|b|^2) \left(\int_0^1 J_2^{\frac{1}{p}} d\mu(t) \right)^p\\
	\lesssim &
	\sup_{b \in \mathbb{D}} \left( \int_0^1 \frac{(1 - |b|)^{\frac{1}{p}-\tau}}{(1 - t)^{\delta} (1 - t|b|)^{1-(\delta+\tau)+\frac{1}{p}}} d\mu(t) \right)^p
	\lesssim 1.
\end{aligned}
\]
Therefore, 
\[
\| \mathcal{C}_\mu (f) \|_{AT_p^\infty(\a)} \lesssim \| f \|_{AT_p^\infty(\a)}.\]

{\bf Case \( p =1 \).}
Using Lemma \ref{le2.2} (ii), we have
\[
\begin{aligned}
&\int_0^1 \frac{(1 - r)^{p+\a+1}}{(1 - tr)^{p+\a+2}} \int_0^{2\pi} \frac{d\theta}{|1 - tre^{i\theta}|^p |1 - \overline{b}re^{i\theta}|^{2}} dr \\
\asymp& \int_0^1 \frac{(1 - r)^{p+\a+1}}{(1 - tr)^{p+\a+2}}
\left( \frac{1}{(1-|b|r) |1 - t\overline{b}r^2|} +\frac{1}{|1 - t\overline{b}r^2|^2}\log \frac{e}{1 - |\varphi_{br}(tr)|^2} \right) dr\\
=& \int_0^1 \frac{(1 - r)^{p+\a+1}}{(1 - tr)^{p+\a+2}(1-|b|r) |1 - t\overline{b}r^2|} dr + \int_0^1 \frac{(1 - r)^{p+\a+1}  \log \frac{e}{1 - |\varphi_{br}(tr)|^2} }{(1 - tr)^{p+\a+2} |1 - t\overline{b}r^2|^2}dr \\
=&J_3 + J_4.
\end{aligned}
\]
The estimation of \(J_3\) follows the same procedure as that of \(J_1\) and we obtain
\[
\begin{aligned}
 \sup_{b \in \mathbb{D}} (1-|b|^2) \left(\int_0^1 J_3^{\frac{1}{p}} d\mu(t) \right)^p 
\lesssim 1.
\end{aligned}
\]   

 Finally, we estimate \(J_4\).
Let \( 0 < d < \frac{1}{4} \). It is obvious that
\[
(1 - |\varphi_{br}(tr)|^2)^d \log \frac{e}{1 - |\varphi_{br}(tr)|^2} \lesssim 1.
\]
Since \( 2d + \frac{1}{2} < 1 \), we may choose a positive real number \( \gamma \) such that \( 2d + \frac{1}{2} < \gamma < 1 \). This yields that \(\gamma-1-2d > -1 \) and \( 2-2d - (\gamma-1-2d) - 1 =2-\gamma> 0 \). Bear in mind that \( 1 - t < 1 - tr \), \(1-r<1-tr\) and \( 1 - r < 1 - |b|r \). Using Lemma \ref{le2.3}, it follows that  
\[
\begin{aligned}
J_4 &= \int_0^1 \frac{(1 - r)^{p+\a+1} }{(1 - tr)^{p+\a+2} |1 - t\overline{b}r^2|^2}\log \frac{e}{1 - |\varphi_{br}(tr)|^2} dr \\
&\lesssim \int_0^1 \frac{(1 - r)^{p+\a+1}}{(1 - tr)^{p+\a+2} |1 - t\overline{b}r^2|^2 (1 - |\varphi_{br}(tr)|^2)^d} dr \\
&= \int_0^1 \frac{(1 - r)^{p+\a+1}}{(1 - tr)^{p+\a+2+d} (1 - |b|r)^d |1 - t\overline{b}r^2|^{2 - 2d}} dr\\
&\leq  \int_0^1 \frac{(1 - r)^{p+\a+1}}{(1-tr)^{p+\a+2+d-\gamma}(1 - t)^\gamma (1-r)^d |1 - t\ol{b}r^2|^{2 - 2d}} dr \\
&\le
\frac{1}{(1 - t)^\gamma}\int_0^1
\frac{(1 - r)^{\gamma-1-2d}}{(1 - t|b|r)^{2-2d}} dr\\
&\lesssim \frac{1}{(1 - t)^\gamma (1 - t|b|)^{2-\gamma}}.
\end{aligned}
\]
Since \( \mu \) is a Carleson measure, using Lemma \ref{le2.4}  we obtain
\[
\begin{aligned}
&\sup_{b \in \mathbb{D}} (1-|b|^2) \left(\int_0^1 J_4^{\frac{1}{p}} d\mu(t)\right)^p\\ 
\lesssim &
\sup_{b \in \mathbb{D}}\left( \int_0^1 \frac{(1 - |b|)^{\frac{1}{p}}}{(1 - t)^{\frac{\gamma}{p}} (1 - t|b|)^{1+\frac{1}{p}-\frac{\gamma}{p}}} d\mu(t)\right)^p  
\lesssim  1.
\end{aligned}
\]
Hence,
\[
\| \mathcal{C}_\mu (f) \|_{AT_p^\infty(\a)} \lesssim \| f \|_{AT_p^\infty(\a)},
\] 
which implies that \(\mathcal{C}_\mu\) is bounded on \(AT_p^\infty(\a)\). 
The proof is complete.
\end{proof}

\begin{Theorem}
Let \(1 \leq p<\infty\) and \(\a>-2\). Let \(\mu\) be a finite positive Borel measure on $[0, 1)$. The Ces\`aro-like operator \(\mathcal{C}_\mu\) is compact on \(AT_p^\infty(\a)\) if and only if \(\mu\) is a vanishing Carleson measure.
\end{Theorem}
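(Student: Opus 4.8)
The proof will establish both implications, following the architecture of Theorem \ref{cmub} but replacing the Carleson characterization (Lemma \ref{le2.4}) by its vanishing counterpart (Lemma \ref{le4.2}) and invoking the compactness criterion of Lemma \ref{com}.

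For \emph{necessity}, suppose $\mathcal{C}_\mu$ is compact. I would reuse the test functions $f_\rho(z)=\frac{1-\rho}{(1-\rho z)^{(\alpha+2)/p+1}}$ from the boundedness proof, which satisfy $\sup_{0<\rho<1}\|f_\rho\|_{AT_p^\infty(\alpha)}\lesssim 1$. Since $|f_\rho(z)|\le (1-\rho)(1-R)^{-(\alpha+2)/p-1}\to 0$ for $|z|\le R<1$, the family $f_\rho$ tends to $0$ uniformly on compact subsets of $\mathbb{D}$ as $\rho\to 1^-$, so by Lemma \ref{com} compactness forces $\|\mathcal{C}_\mu f_\rho\|_{AT_p^\infty(\alpha)}\to 0$. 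Keeping the norm explicit in the pointwise bound from Lemma \ref{az}, namely $|\mathcal{C}_\mu(f_\rho)(\rho)|\lesssim \|\mathcal{C}_\mu f_\rho\|_{AT_p^\infty(\alpha)}(1-\rho)^{-(\alpha+2)/p}$, and combining it with the lower bound $|\mathcal{C}_\mu(f_\rho)(\rho)|\gtrsim \mu([\rho,1))(1-\rho)^{-(\alpha+2)/p-1}$ already derived in Theorem \ref{cmub}, I obtain $\frac{\mu([\rho,1))}{1-\rho}\lesssim \|\mathcal{C}_\mu f_\rho\|_{AT_p^\infty(\alpha)}\to 0$, which is exactly the vanishing Carleson condition.

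For \emph{sufficiency}, assume $\mu$ is a vanishing Carleson measure and let $\{f_j\}$ be bounded in $AT_p^\infty(\alpha)$ with $f_j\to 0$ uniformly on compacts; by Lemma \ref{com} it suffices to prove $\|\mathcal{C}_\mu f_j\|_{AT_p^\infty(\alpha)}\to 0$. Fix $\epsilon>0$ and choose $r_0\in(0,1)$ with $\mu([t,1))\le \epsilon(1-t)$ for $t\ge r_0$, then split $\mu=\mu_1+\mu_2$ with $\mu_1=\mu|_{[0,r_0)}$ and $\mu_2=\mu|_{[r_0,1)}$, so $\mathcal{C}_\mu=\mathcal{C}_{\mu_1}+\mathcal{C}_{\mu_2}$. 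The measure $\mu_2$ is a Carleson measure with $\|\mu_2\|_1\lesssim\epsilon$, and re-running the estimates of Theorem \ref{cmub} while tracking the Carleson constant through Lemma \ref{le2.4} gives $\|\mathcal{C}_{\mu_2}f_j\|_{AT_p^\infty(\alpha)}\lesssim \|\mu_2\|_1\,\|f_j\|_{AT_p^\infty(\alpha)}\lesssim\epsilon$ uniformly in $j$. For the other piece, $\mathcal{C}_{\mu_1}(f_j)'(z)$ involves $f_j$ and $f_j'$ only at points $tz$ with $t<r_0$, hence inside the compact disk $\{|w|\le r_0\}$, while $|1-tz|\ge 1-r_0$; since $f_j\to 0$ (and therefore $f_j'\to 0$) uniformly on compacts, this yields a \emph{uniform} bound $\sup_{z\in\mathbb{D}}|\mathcal{C}_{\mu_1}(f_j)'(z)|=:\epsilon_j\to 0$. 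Feeding this into Lemma \ref{at} gives $\|\mathcal{C}_{\mu_1}f_j\|_{AT_p^\infty(\alpha)}^p\lesssim \epsilon_j^p\,\sup_{b}(1-|b|^2)\int_{\mathbb{D}}\frac{(1-|z|^2)^{p+\alpha+1}}{|1-\overline{b}z|^{2}}dA(z)\lesssim \epsilon_j^p$, the integral being finite uniformly in $b$ because $p+\alpha+1>0$. Combining the two pieces, $\limsup_j\|\mathcal{C}_\mu f_j\|_{AT_p^\infty(\alpha)}\lesssim\epsilon$, and letting $\epsilon\to 0$ finishes the argument.

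The main obstacle is the sufficiency direction, and within it the bookkeeping in the $\mathcal{C}_{\mu_2}$ estimate: I must verify that each application of Lemma \ref{le2.4} in the proof of Theorem \ref{cmub} contributes a factor proportional to the $1$-Carleson norm $\|\mu\|_1$ (to the first power, after the $(\,\cdot\,)^{1/p}$ and $(\,\cdot\,)^p$ operations are accounted for), so that the operator norm of $\mathcal{C}_{\mu_2}$ genuinely scales linearly in $\|\mu_2\|_1$ and can thus be made small. The control of $\mathcal{C}_{\mu_1}$ is comparatively routine once one observes that its symbol is supported away from the boundary, which decouples it from the boundary behaviour encoded in the $AT_p^\infty(\alpha)$ norm.
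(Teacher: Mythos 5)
Your necessity argument coincides with the paper's: the same test functions $f_\rho$, Lemma \ref{com}, the pointwise estimate of Lemma \ref{az} with the norm of $\mathcal{C}_\mu(f_\rho)$ kept explicit, and the lower bound $|\mathcal{C}_\mu(f_\rho)(\rho)|\gtrsim \mu([\rho,1))(1-\rho)^{-\frac{\alpha+2}{p}-1}$ recycled from Theorem \ref{cmub}; there is nothing to compare there. Your sufficiency, however, is a genuinely different route. The paper keeps the measure intact and splits the supremum over $b$ at a radius $\delta$ supplied by Lemma \ref{le4.2} (the Carleson-type integrals are uniformly small for $\delta<|b|<1$), and then, inside $|b|\le\delta$, splits the $t$-integral at $t_0$, killing $[0,t_0]$ by uniform convergence on compacts and $[t_0,1)$ by the tail bound $\int_{t_0}^1(1-t)^{-c}\,d\mu(t)<\epsilon$. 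You instead split the measure, $\mu=\mu_1+\mu_2$ with $\mu_2=\mu|_{[r_0,1)}$, and rely on the operator-norm bound $\|\mathcal{C}_{\mu_2}\|\lesssim\|\mu_2\|_1$ together with $\|\mu_2\|_1\le\epsilon$, which does follow from the vanishing condition (for $t<r_0$ one has $\mu_2([t,1))=\mu([r_0,1))\le\epsilon(1-r_0)\le\epsilon(1-t)$). The point you rightly flag as the main obstacle — that Lemma \ref{le2.4} is stated only as a qualitative equivalence, so linearity in the Carleson constant must be verified — is real but surmountable: integrating by parts in $t$ against $N(t)=\mu([t,1))\le\|\mu\|_1(1-t)$ and estimating the resulting integrals by Lemma \ref{le2.3} gives $\int_0^1\frac{(1-|b|)^{r}}{(1-t)^{c}(1-t|b|)^{1+r-c}}\,d\mu(t)\lesssim\|\mu\|_1$ uniformly in $b$, and the measure enters the boundedness proof of Theorem \ref{cmub} only through integrals of exactly this shape (including the $J_4$ logarithmic term when $p=1$), so your bookkeeping goes through. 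Your $\mathcal{C}_{\mu_1}$ estimate is likewise sound: the symbol is supported in $[0,r_0)$, so $\sup_{z\in\mathbb{D}}|\mathcal{C}_{\mu_1}(f_j)'(z)|\to0$ since $f_j$ and $f_j'$ tend to $0$ uniformly on $\{|w|\le r_0\}$, and the weighted integral in Lemma \ref{at} is uniformly bounded in $b$ because $p+\alpha+1>0$ keeps the Forelli--Rudin exponent subcritical — essentially the same mechanism as the paper's $[0,t_0]$ piece. What each route buys: yours is structurally cleaner and slightly stronger in spirit, exhibiting $\mathcal{C}_\mu$ as a small-norm perturbation of an operator with compactly supported symbol (hence a norm-limit of compact operators), at the cost of a quantitative strengthening of Lemma \ref{le2.4}; the paper's route uses only the stated qualitative lemmas, at the cost of the two-level split in $b$ and $t$. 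Two small loose ends you should tidy: normalize $f_j(0)=0$, or note $\mathcal{C}_\mu(f_j)(0)=\mu([0,1))f_j(0)\to0$, before invoking the derivative characterization of Lemma \ref{at}; and justify $f_j'\to0$ uniformly on compacts by the Cauchy integral formula, as the paper does.
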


\begin{proof} First we assume that \( \mathcal{C}_\mu \) is compact on \( AT_p^\infty(\a) \). For \( \frac{1}{2} < \rho < 1 \), let  
\[
f_\rho(z) = \frac{(1 - \rho)}{(1 - \rho z)^{ \frac{\a+2}{p}+1}}, \quad z \in \mathbb{D}.
\]  
We see that \( f_\rho \in AT_p^\infty(\a) \) and \( \sup_{\frac{1}{2} < \rho < 1} \| f_\rho \|_{AT_p^\infty(\a)} \lesssim 1 \). 
Furthermore,   \(f_\rho\to 0\) uniformly on compact subsets of \(\mathbb{D}\) as \(\rho\to 1\).
Since \( \mathcal{C}_\mu \) is compact on \( AT_p^\infty(\a) \), by virtue of  Lemma \ref{com}, it follows that  \[ \lim_{\rho \to 1} \| \mathcal{C}_\mu (f_\rho) \|_{AT_p^\infty(\a)} = 0  .\] 
Using Lemma \ref{az}, we have
\begin{align}\label{4.1}
\sup_{z \in \mathbb{D}} (1 - |z|^2)^{\frac{\a+2}{p}} |\mathcal{C}_\mu (f_\rho)(z)| \lesssim \| \mathcal{C}_\mu (f_\rho) \|_{AT_p^\infty(\a)} \to 0, \quad \text{as} \quad \rho \to 1. 
\end{align}
Thus, for \( \frac{1}{2}< \rho < 1 \),  by the proof of    Theorem \ref{cmub}  we have 
\[\begin{aligned} 
 (1-\rho)^{ \frac{\a+2}{p}}|\mathcal{C}_\mu (f_\rho)(\rho)|  \gtrsim  \frac{\mu([\rho, 1))}{1-\rho},
\end{aligned}\]
%\[\begin{aligned} 
%&(1-\rho)^{ \frac{\a+2}{p}}|\mathcal{C}_\mu (f_\rho)(\rho)| \\
%=&(1-\rho)^{ \frac{\a+2}{p}}\left|\int_{0}^{1} \frac{(1-\rho)}{(1-t \rho)\left(1-t \rho^{2}\right)^{ \frac{\a+2}{p}+1}} d \mu(t) \right|\\ 
%\ge& (1-\rho)^{ \frac{\a+2}{p}} \left|\int_{\rho}^{1} \frac{(1-\rho)}{(1-t \rho)\left(1-t \rho^{2}\right)^{ \frac{\a+2}{p}+1}} d \mu(t)\right| \\ 
%\gtrsim& \frac{\mu([\rho, 1))}{1-\rho},
%\end{aligned}\]
which implies that  
\[
\frac{\mu([\rho, 1))}{1 - \rho} \lesssim \| \mathcal{C}_\mu (f_\rho) \|_{AT_p^\infty(\a)}.
\]  
Combining with (\ref{4.1}), we get that \(\mu\) is a vanishing Carleson measure.

Conversely, suppose  \(\mu\) is a vanishing Carleson measure. Consider a bounded sequence \(\{f_j\}_{j=1}^\infty\) in \(AT_p^\infty(\a)\) that converges uniformly to 0 on every compact subset of \(\mathbb{D}\). We may assume without loss of generality that \(f_j(0) = 0\) for all \(j \geq 1\) and \(\sup_{j \geq 1} \|f_j\|_{AT_p^\infty(\a)} \lesssim 1\). Using Lemma \ref{com}, we need to establish that
$$
\lim_{j \to \infty} \|C_\mu(f_j)\|_{AT_p^\infty(\a)} = 0.
$$	
Since \( \mu \) is a vanishing Carleson measure, for any \( \epsilon > 0 \), using Lemma \ref{le4.2}, we obtain that there exists a \(  \delta \in (0,1)  \) such that  
\[
\int_0^1 \frac{(1 - |b|)^r}{(1 - t)^c (1 - t|b|)^{1 + r - c}} d\mu(t) < \epsilon  \quad \text{ for all } \quad \delta < |b| < 1,
\]  
where \(r>0 \) and \( 0 \leq c < 1 \). 
Observe that Lemma \ref{le4.2} also indicates that \( \int_0^1 \frac{1}{(1 - t)^c} d\mu(t) < \infty \). As a result, there exists a $t_0$ with \( 0 < t_0 < 1 \) for which  
\begin{align}\label{4.2}
\int_{t_0}^1 \frac{1}{(1 - t)^c} d\mu(t) < \epsilon.
\end{align}
Using Lemma \ref{at}, we get
\[
\begin{aligned}
 \| \mathcal{C}_\mu (f_j) \|_{AT_p^\infty(\a)}^p  
\lesssim& \sup_{|b| \leq \delta}  \int_{\mathbb{D}} |\mathcal{C}_\mu (f_j)'(z)|^p (1 - |z|^2)^{p+\a} (1 - |\varphi_b(z)|^2) dA(z)\\
&+ \sup_{\delta < |b| < 1}  \int_{\mathbb{D}} |\mathcal{C}_\mu (f_j)'(z)|^p (1 - |z|^2)^{p+\a} (1 - |\varphi_b(z)|^2) dA(z) \\
=& H_1+H_2.
\end{aligned}
\]
For \(H_2\), by Theorem \ref{cmub}, we obtain that
\[	
H_2 \lesssim \sup_{\delta < |b| < 1} \int_0^1 \frac{(1 - |b|)^r}{(1 - t)^c (1 - t|b|)^{1 + r - c}} d\mu(t) < \epsilon
\]
for some \(r>0 \) and \( 0 \leq c < 1 \).
Furthermore, using (\ref{xy}), we get
\[
\begin{aligned}
H_1 =&\sup_{|b| \leq \delta}  \int_{\mathbb{D}} |\mathcal{C}_\mu (f_j)'(z)|^p (1 - |z|^2)^{p+\a} (1 - |\varphi_b(z)|^2) dA(z)  \\
\leq& \sup_{|b| \leq \delta}  \int_{\mathbb{D}} \left( \int_0^1 \left( \frac{|f_j(tz)|}{|1 - tz|^2} + \frac{|f_j'(tz)|}{|1 - tz|} \right) d\mu(t) \right)^p (1 - |z|^2)^{p+\a} (1 - |\varphi_b(z)|^2) dA(z)  \\
\lesssim& \sup_{|b| \leq \delta}  \int_{\mathbb{D}} \left( \int_0^{t_0} \left( \frac{|f_j(tz)|}{|1 - tz|^2} + \frac{|f_j'(tz)|}{|1 - tz|} \right) d\mu(t) \right)^p (1 - |z|^2)^{p+\a} (1 - |\varphi_b(z)|^2) dA(z)  \\
&+ \sup_{|b| \leq \delta}  \int_{\mathbb{D}} \left( \int_{t_0}^1 \left( \frac{|f_j(tz)|}{|1 - tz|^2} + \frac{|f_j'(tz)|}{|1 - tz|} \right) d\mu(t) \right)^p (1 - |z|^2)^{p+\a} (1 - |\varphi_b(z)|^2) dA(z).
\end{aligned}
\] 
By virtue of the Cauchy integral theorem, it can be deduced that the sequence \(\{f_j'\}_{j = 1}^{\infty}\) converges uniformly to \(0\) on every compact subset of \(\mathbb{D}\). 
Hence,
\[
\begin{aligned}
&\sup_{|b| \leq \delta}  \int_{\mathbb{D}} \left( \int_0^{t_0} \left( \frac{|f_j(tz)|}{|1 - tz|^2} + \frac{|f_j'(tz)|}{|1 - tz|} \right) d\mu(t) \right)^p (1 - |z|^2)^{p+\a} (1 - |\varphi_b(z)|^2) dA(z)  \\
\lesssim& \sup_{|w| \leq t_0} (|f_j(w)| + |f_j'(w)|) \to 0, \quad  j \to \infty.
\end{aligned}
\]
Similar to the proof of Theorem \ref{cmub}, we can also show that 
\[
\begin{aligned}
&\sup_{|b| \leq \delta}  \int_{\mathbb{D}} \left( \int_{t_0}^1 \left( \frac{|f_j(tz)|}{|1 - tz|^2} + \frac{|f_j'(tz)|}{|1 - tz|} \right) d\mu(t) \right)^p (1 - |z|^2)^{p+\a} (1 - |\varphi_b(z)|^2) dA(z)  \\
\lesssim &\sup_{|b| \leq \delta} (1-|b|^2)  \int_{\mathbb{D}} \left( \int_{t_0}^1 \frac{d\mu(t)}{|1 - tz|(1 - t|z|)^{ \frac{\a+2}{p}+1} } \right)^p \frac{(1 - |z|^2)^{p+\a+1}}{|1 - \overline{b}z|^{2}} dA(z)  \\
\lesssim &\sup_{|b| \leq \delta} (1-|b|^2) \left( \int_{t_0}^1 \left( \int_{\mathbb{D}} \frac{(1 - |z|^2)^{p+\a+1} dA(z)}{|1 - tz|^p(1 - t|z|)^{p+\a+2}  |1 - \overline{b}z|^{2}} \right)^{\frac{1}{p}} d\mu(t) \right)^p\\
\lesssim& \sup_{|b| \leq \delta}\left( \int_{t_0}^1 \frac{(1 - |b|)^r}{(1 - t)^c (1 - t|b|)^{1 + r- c}} d\mu(t) \right)^p
\end{aligned}
\]
for some \(r>0 \) and \( 0 \leq c < 1 \). By (\ref{4.2}), we deduce that
\[
\sup_{|b| \leq \delta} \int_{t_0}^1 \frac{(1 - |b|)^r}{(1 - t)^c (1 - t|b|)^{1 + r - c}} d\mu(t) \lesssim \int_{t_0}^1 \frac{1}{(1 - t)^c} d\mu(t) < \epsilon.
\] 
Therefore,  
\[
\lim_{j \to \infty} \| \mathcal{C}_\mu (f_j) \|_{AT_p^\infty(\a)} = 0.
\]  
The proof is complete.
\end{proof}

\noindent \textbf{Acknowledgments.} 
 The   authors are supported  by GuangDong Basic and Applied Basic Research Foundation (No. 2023A1515010614).\msk

\noindent\textbf{Data Availability.}  Data sharing is not applicable for this article as no datasets were generated or analyzed during the current study.\msk

 \noindent \textbf{Conflict of interest.}   The authors declare no competing interests.\msk

\end{document}